\newcommand{\R}{\mathbb{R}}
\newcommand{\Z}{\mathbb{Z}}
\renewcommand{\Re}{\operatorname{Re}}
\renewcommand{\phi}{\varphi}
\renewcommand{\epsilon}{\varepsilon}
\newcommand{\im}{\mathrm{im \;}}
\newcommand{\Ac}{\mathcal{A}}
\newcommand{\Uc}{\mathcal{U}}
\newcommand{\Gc}{\mathcal{G}}
\newcommand{\Ic}{\mathcal{I}}
\DeclareMathOperator{\dom}{dom}
\newcommand{\cut}[1]{}
\crefname{hypothesis}{Hypothesis}{Hypotheses}
\title{The Hodge-Laplacian on the \v{C}ech-de Rham complex governs coupled problems}
\author{Wietse M. Boon\thanks{MOX Scientific Modeling and Computing, Department of Mathematics, Politecnico di Milano, Piazza Leonardo da Vinci 32, Milano, Italy}
\and Daniel F. Holmen\thanks{Center for Modeling of Coupled Subsurface Dynamics, Department of Mathematics, University of Bergen, Allégaten 41, Bergen, Norway, \email{daniel.holmen@uib.no}}
\and Jan M. Nordbotten\footnotemark[2] \and Jon E. Vatne\thanks{Department of Economics, BI Norwegian Business School, Kong Christian Frederiks plass 5, Bergen, Norway}, \\ \funding{WMB has received funding from the European Union's Horizon 2020 research and innovation programme under the Marie Skłodowska-Curie grant agreement No. 101031434. The work of JMN took place in the context of the
”Akademia” grant at the University of Bergen titled ”FracFlow” (funded by Equinor ASA).}}
\newcommand*{\addFileDependency}[1]{% argument=file name and extension
  \typeout{(#1)}% latexmk will find this if $recorder=0 (however, in that case, it will ignore #1 if it is a .aux or .pdf file etc and it exists! if it doesn't exist, it will appear in the list of dependents regardless)
  \@addtofilelist{#1}% if you want it to appear in \listfiles, not really necessary and latexmk doesn't use this
  \IfFileExists{#1}{}{\typeout{No file #1.}}% latexmk will find this message if #1 doesn't exist (yet)
}
\newcommand*{\myexternaldocument}[1]{%
    \externaldocument{#1}%
    \addFileDependency{#1.tex}%
    \addFileDependency{#1.aux}%
}
\begin{document}

\maketitle

\begin{abstract}
By endowing the \v{C}ech-de Rham complex with a Hilbert space structure, we obtain a Hilbert complex with sufficient properties to allow for well-posed Hodge-Laplace problems. We observe that these Hodge-Laplace equations govern a class of coupled problems arising from physical systems including elastically attached rods, multiple-porosity flow systems and 3D-1D coupled flow models. 
\end{abstract}

\begin{keywords}
 \v{C}ech-de Rham complex, Hodge-Laplace equation, mathematical modeling
\end{keywords}
\begin{AMS}
58J10, 35Q86, 35J05
\end{AMS}

\section{Introduction} 
%\hfill \break
The \v{C}ech-de Rham complex, introduced in \cite{Weil1952}, is a double complex that has primarily been used as a tool in algebraic topology to compute cohomology groups and to prove results in homological algebra. We introduce weighted inner products on the \v{C}ech-de Rham complex, granting us a Hilbert complex which admits the compactness property. In turn, the codifferential and the corresponding Hodge-Laplace operator can be derived. Using the theory of evolutionary equations, we can also account for time-dependency in the the associated Hodge-Laplace problem. At an abstract level, we obtain well-posedness of these systems, an orthogonal decomposition, the Poincaré inequality, convergent mixed finite element approximations, finite-dimensional cohomology, as well as functional guaranteed a posteriori bounds.

The main motivation of this contribution is to share the observation that several models of physical systems of coupled domains have a \v{C}ech-de Rham Hodge-Laplacian structure. 
In this text, we detail the case of 1D elastic rods \cite{antman2005nonlinear},
multiple porosity models \cite{barenblatt1960basic,chen1989transient,straughan2017mathematical}, and mixed-dimensional coupling with high dimensionality gap \cite{koch2020modeling,koppl2018mathematical,kuchta2021analysis}. Additionally, a \v{C}ech-de Rham Hodge-Laplacian structure can be identified in several other applications such as in the vibration of elastically connected rods \cite{kelly2009free} and heat- and fluid transfer in layered materials %\jmn{Cite: Bear} 
\cite{yuan2022heat, bear2013dynamics}.
The developments presented herein mirror the observation that flow in fractured porous media is governed by a Hodge-Laplacian on a double complex \cite{mdG}.

%This article is structured as follows: 
\Cref{sub: CdR,sub: exterior derivative HL} present the mathematical background by introducing the \v{C}ech-de Rham complex and the Hodge-Laplace operator for the de Rham complex, respectively. \Cref{sec: HL on CdR} combines these two to form the main focus of this work, namely Hodge Laplace problems on the \v{C}ech-de Rham complex, and presents the main results from the perspective of Hilbert complexes. Finally, \Cref{sec: examples} presents three examples from physical applications that are governed by the Hodge-Laplace equations.

\subsection{The \v{C}ech-de Rham complex}\label{sub: CdR}\noindent
Informally speaking, the \v{C}ech-de Rham complex is constructed by combining two cochain complexes; the de Rham complex and the \v{C}ech complex. The de Rham complex consists of differential forms on a manifold $\Omega$, with the exterior derivative $d$ acting as a differential operator. The \v{C}ech complex, on the other hand, introduces an open cover $\Uc$ of $\Omega$ and employs an operator $\delta$, which takes differences of differential forms on the intersections of sets in $\Uc$. 

Herein, we focus on the aspects of this theory that are most relevant for our purposes and refer the reader to \cite{bott-tu} for a general exposition. Let $\Uc = \{U_i\}_{i \in \Ic}$ be an open cover, indexed by an ordered set of integers $\Ic$, of a bounded Lipschitz domain $\Omega \subset \mathbb{R}^n$. 
%comment on smooth mf and smooth diff form vs. lipschitz domain and L^2 forms.
By convention, we use multi-indices (and sometimes multiple subscripts to emphasize the multi-index) to denote intersection $U_i = U_{i_0, ..., i_p} = U_{i_0} \cap ... \cap U_{i_p}$ and we let $\Ic^p$ denote the set of increasing multi-indices $i = (i_0, ..., i_p)$ with $U_{i} \ne \emptyset$. Note that $\Ic = \Ic^0$ corresponds to single indices and $\Ic^p$ are multi-indices of length $p+1$, and we will use the index $i$ for both singular indices $i \in \Ic$ and for multi-indices $i \in \Ic^p$.

We will throughout this text assume to be working with a finite \emph{good cover}, meaning that $\Ic$ is a finite set and all nonempty intersections $U_{i=(i_0, ..., i_p)}$ are diffeomorphic to $\R^n$. 
Let then $\Lambda^k(U_i)$ be the space of differential $k$-forms on a given open set $U_i$ for $i\in\Ic^p$. The spaces of differential forms together with the exterior derivative $d$ form a cochain complex, meaning that $d^2 = d \circ d = 0$. The complex $(\Lambda^\bullet(U_i), d)$ is called the \emph{de Rham complex}:
\begin{equation}
0 \to \Lambda^0(U_i) \xrightarrow[]{d} \Lambda^1(U_i) \xrightarrow[]{d} ... \xrightarrow[]{d} \Lambda^{n-1}(U_i) \xrightarrow[]{d} \Lambda^n(U_i) \to 0.
\end{equation}
%The indexed scripts may be omitted for notational brevity.

For a fixed open cover $\Uc$, we write $\Ac^{p, \,q} := \bigoplus_{i \in \Ic^p} \Lambda^q(U_i)$ for the de Rham complex on the open cover and its intersections of degree $p$. We now proceed to define a differential operator for the degree of overlap. Consider first  each non-empty overlap $U_{i=(i_0, i_1)}$ with $i_0<i_1$ and $i \in \Ic^1$, and let the operator $\delta_{i}: \Lambda^q(U_{i_0}) \oplus \Lambda^q(U_{i_1}) \to \Lambda^q(U_i)$ compute the difference as $\delta_i(\alpha_{i_0}, \alpha_{i_1}) = (\alpha_{i_1} - \alpha_{i_0})|_{U_{i}}$. This is formally an abuse of notation for the more cumbersome $\alpha_{i_1}|_{U_{i}}- \alpha_{i_0}|_{U_{i}}$, but will not lead to any confusion. By considering all overlaps, we obtain the operator $\delta: \Ac^{0, \,q} \to \Ac^{1, \,q}$, such that $(\delta\alpha)_i = \delta_i(\alpha_{i_0}, \alpha_{i_1})$. 
This operator generalizes to intersections of higher degree  $\delta: \Ac^{p, q} \to \Ac^{p + 1, q}$ as follows:
\begin{align} \label{eq: difference operator}
(\delta\alpha)_i &= \sum_{j=0}^{p + 1} (-1)^j (\alpha_{i_0, ..., i_{j-1}, i_{j+1} ,..., i_{p + 1}})|_{U_i}, &
\forall i \in \Ic^{p + 1}, \alpha &\in \Ac^{p, q}.
\end{align}

As an example, consider an open cover $\Uc = \{U_0, U_1, U_2\}$ and let $\alpha = (\alpha_0, \alpha_1, \alpha_2) \in \Ac^{0,q}$, i.e. each component $\alpha_i$ is a differential form of degree $q$. Then 
\begin{equation}
\delta \alpha = \left((\alpha_1 - \alpha_0)|_{U_{0,1}}, (\alpha_2 - \alpha_0)|_{U_{0,2}}, (\alpha_2 - \alpha_1)|_{U_{1,2}} \right).  
\end{equation}
 If we again apply the operator $\delta$, we get 
\begin{equation}
\delta^2 \alpha = \left((\alpha_2 - \alpha_1) - (\alpha_2 - \alpha_0) + (\alpha_1 - \alpha_0) \right)|_{U_{0,1,2}} = 0.
\end{equation}
One  can readily show that in likeness to the exterior derivative $d$, the difference operator always satisfies $\delta^2 = 0$ and consequently it defines a cochain complex on $\Ac^{\bullet, q}$, called the \emph{\v{C}ech complex with values in $\Lambda^q$}. We refer to the double complex $(\Ac^{\bullet, \bullet}, (d, \delta))$  as the \emph{\v{C}ech-de Rham complex}. The double-graded complex can be turned into a single-graded \emph{total complex} by considering the anti-diagonals $\Ac^k := \bigoplus\limits_{p + q = k} \Ac^{p, q}$, which we also refer to as the \v{C}ech-de Rham complex. 

Since the difference operator is a finite alternating sum of restrictions, it commutes with the exterior derivative. The \emph{total differential} is given by
\begin{align}
    D^k: \Ac^k &\to \Ac^{k+1}, &
    D^k &= d + (-1)^k \delta.
\end{align}
Since $D^k$ acts on a direct sum of spaces of different degree (both in terms of forms and intersections), the degree of $d$ and $\delta$ is therefore necessarily determined from context based on the element of $\Ac^k$ they act on. Since the operators $d$ and $\delta$ commute, we get that 
\begin{equation}
% D^2 = 
D^{k+1} \circ D^k  = d^2 + (-1)^{k} d\delta + (-1)^{k+1} \delta d + (-1)^{2k+1} \delta^2 = 0.
\end{equation}
Since $D^k$ acts on each $\Ac^{p,q}$ with $p+q=k$, we frequently omit the indexing of the operators $d$ and $\delta$. Note that some authors choose to define the two differential operators in such a way that they are anti-commutative and then define the total differential as $D = d + \delta$. 

The diagram below shows the augmented \v{C}ech-de Rham complex. The leftmost column is the original de Rham complex on $\Omega$ and $r$ is the sum of the restriction map onto the sets $U_i \in \Uc$ with $i \in \Ic$. The remaining columns are the de Rham complexes on the intersections of sets of degree $p$. The rows are the \v{C}ech complexes with values in $\Lambda^q$, and the total complex consists of the anti-diagonals of the double complex $\Ac^{p,q}$: 
\begin{equation} \label{eq: double complex}
\begin{tikzcd}
\Lambda^n(\Omega) \arrow[rr, "r"]                &  & {\mathcal{A}^{0,n}} \arrow[r, "(-1)^n\delta"]           & {\mathcal{A}^{1,n}} \arrow[r]                           & ... \arrow[r]           & {\mathcal{A}^{p,n}}                \\
... \arrow[u, "d"] \arrow[rr, "r"]               &  & ... \arrow[r] \arrow[u, "d"]                            & ... \arrow[r] \arrow[u, "d"]                            & ... \arrow[r] \arrow[u] & ... \arrow[u, "d"]                 \\
\Lambda^1(\Omega) \arrow[rr, "r"] \arrow[u, "d"] &  & {\mathcal{A}^{0,1}} \arrow[r, "-\delta"] \arrow[u, "d"] & {\mathcal{A}^{1,1}} \arrow[r, "\delta"] \arrow[u, "d"]  & ... \arrow[u] \arrow[r] & {\mathcal{A}^{p,1}} \arrow[u, "d"] \\
\Lambda^0(\Omega) \arrow[rr, "r"] \arrow[u, "d"] &  & {\mathcal{A}^{0,0}} \arrow[r, "\delta"] \arrow[u, "d"]  & {\mathcal{A}^{1,0}} \arrow[r, "-\delta"] \arrow[u, "d"] & ... \arrow[u] \arrow[r] & {\mathcal{A}^{p,0}} \arrow[u, "d"]
\end{tikzcd}
\end{equation}

\subsection{The Hodge-Laplacian of the exterior derivative}
\label{sub: exterior derivative HL}\noindent
%\jmn{Should we in this section have some notation for indicating that in the sums we are considering multi-indexes of length $k$, rather than stating it verbally in the text? This would make equation (1.8)) read better by itself.}
Let $\Omega \subset \mathbb{R}^n$ be a bounded Lipschitz domain as before, and let 
$\mathcal{N}$ denote the index set $\{1, ..., n\}$.  Similarly to the previous section, $\mathcal{N}^k$ denotes the set of increasing multi-indices of length $k$. For two differential $k$-forms $\alpha, \beta \in L^2 \Lambda^k(\Omega)$, we can write them as 
\begin{equation*}
\alpha = \sum_{j \in \mathcal{N}^k} \hat \alpha_j \; dx_{j_1} \wedge ... \wedge dx_{j_k}, \qquad  \beta = \sum_{j \in \mathcal{N}^k} \hat \beta_j \; dx_{j_1} \wedge ... \wedge dx_{j_k},
\end{equation*}
with $\hat \alpha_j, \hat \beta_j \in L^2(\Omega)$.
%where we are summing over all increasing multi-indices $j$ of length $k$. Note that these multi-indices are different from the ones discussed in \cref{sub: CdR}; they are from the set $\{1, ..., n\}$, where $n$ is the dimension of the domain $\Omega$. 
Using the volume form $\mathrm{vol}_\Omega = \bigwedge_{j = 1}^n dx_j$, the inner product on $\Lambda^k(\Omega)$ and its induced norm are given by
\begin{align} \label{eq: inner product}
\langle \alpha, \beta \rangle_{\Lambda^k(\Omega)} &:= \int_{\Omega} \sum_{j \in \mathcal{N}^k} \hat \alpha_j \hat \beta_j \; \mathrm{vol}_\Omega, &
\|\alpha\|_{\Lambda^k(\Omega)} &:= \sqrt{\langle \alpha, \alpha \rangle_{\Lambda^k(\Omega)}}.
\end{align}

To obtain a more general class of models, we will introduce spatially varying weights in the inner product.  We define the \emph{weighted inner product} by considering a collection $\{w_k\}_{k=0}^n$ of bijective, symmetric, bounded linear operators $w_k: L^2 \Lambda^k(\Omega) \to L^2 \Lambda^k(\Omega)$. We can describe the weighted inner product in terms of the unweighted inner product in the following way:
\begin{equation}
\langle \alpha, \beta \rangle_{\Lambda^{k}_w(\Omega)} = \langle w_k \alpha, \beta \rangle_{\Lambda^{k}(\Omega)}.
\end{equation}
The weights are identified with material parameters in \Cref{sec: examples}. Note that the unweighted inner product corresponds to each $w_k$ being equal to the identity operator. We will work with weighted inner products, with the understanding that we obtain the standard inner product by considering unit weights. 

The inner product gives rise to an adjoint operator $d^*$ of the exterior derivative, called the codifferential:
\begin{equation}
\langle \alpha, d^*_k \beta \rangle_{\Lambda^{k}_w(\Omega)} = \langle d^k \alpha, \beta \rangle_{\Lambda^{k+1}_w(\Omega)}.
\end{equation}
Some authors use $\delta$ to denote the codifferential. We reserve this letter for the difference operator from \eqref{eq: difference operator} associated to \v{C}ech complex. We sometimes emphasize that the codifferential arises from a weighted inner product by denoting it $d^{*,w}_{k}$.
\begin{proposition} \label{prop: weighted adjoint}
The weighted adjoint $d^{*,w}_{k}$ can be expressed in terms of the unweighted adjoint $d^{*,1}_{k}$ in the following way:
\begin{equation} \label{eq: weighted adjoint}
d^{*,w}_{k} = w_{k-1}^{-1} d_{k}^{*,1} w_{k}.
\end{equation}
Here the adjoints of the weights are defined in terms of the unweighted inner-products.
\end{proposition}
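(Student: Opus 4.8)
The plan is to insert the definition of the weighted inner product into the identity characterizing $d^{*,w}_{k}$ and then recognize the \emph{unweighted} adjoint, exploiting the symmetry and invertibility of the weights. I would start from the defining relation of the weighted codifferential (taken as the adjoint of $d$ that lowers the form degree by one, so that the relevant weights are $w_{k-1}$ and $w_k$ as in the statement),
\[
\langle d \alpha, \beta \rangle_{\Lambda^{k}_w(\Omega)} = \langle \alpha, d^{*,w}_{k} \beta \rangle_{\Lambda^{k-1}_w(\Omega)},
\]
valid for all $\alpha$ in the domain of $d$ and all $\beta \in \dom d^{*,w}_{k}$. Using $\langle \cdot, \cdot\rangle_{\Lambda^{j}_w(\Omega)} = \langle w_j \cdot, \cdot\rangle_{\Lambda^{j}(\Omega)}$, the left-hand side becomes $\langle w_k\, d\alpha, \beta\rangle_{\Lambda^{k}(\Omega)}$, and since $w_k$ is symmetric with respect to the unweighted product this equals $\langle d\alpha, w_k \beta\rangle_{\Lambda^{k}(\Omega)}$.

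Next I would move $d$ across using the defining relation of the unweighted adjoint, obtaining $\langle d\alpha, w_k\beta\rangle_{\Lambda^{k}(\Omega)} = \langle \alpha, d^{*,1}_{k} (w_k\beta)\rangle_{\Lambda^{k-1}(\Omega)}$. Treating the right-hand side symmetrically gives $\langle \alpha, d^{*,w}_{k}\beta\rangle_{\Lambda^{k-1}_w(\Omega)} = \langle w_{k-1}\alpha, d^{*,w}_{k}\beta\rangle_{\Lambda^{k-1}(\Omega)} = \langle \alpha, w_{k-1} d^{*,w}_{k}\beta\rangle_{\Lambda^{k-1}(\Omega)}$, again by symmetry of the weight. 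Equating the two yields $\langle \alpha,\, d^{*,1}_{k} w_k\beta - w_{k-1} d^{*,w}_{k}\beta \rangle_{\Lambda^{k-1}(\Omega)} = 0$ for all $\alpha$ in a dense subspace; non-degeneracy of the $L^2$ inner product then forces $w_{k-1} d^{*,w}_{k}\beta = d^{*,1}_{k} w_k\beta$. Since $w_{k-1}$ is bijective and bounded, its inverse is bounded, and multiplying through by $w_{k-1}^{-1}$ produces the claimed identity $d^{*,w}_{k} = w_{k-1}^{-1} d^{*,1}_{k} w_k$.

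The algebra above is routine; the part demanding real care is that the codifferentials are unbounded, densely defined operators, so the manipulation must be justified on the correct domains rather than treated purely formally. Concretely, I would verify that $\beta \in \dom d^{*,w}_{k}$ if and only if $w_k \beta \in \dom d^{*,1}_{k}$, so that every term above is well defined. Because each $w_j$ is bounded, bijective (hence boundedly invertible by the open mapping theorem), and symmetric, multiplication by $w_j$ preserves the dense domain of $d$ and carries the domain of one codifferential bijectively onto that of the other; establishing this domain correspondence, and confirming that the test forms $\alpha$ indeed range over a set dense enough to cancel the inner-product identity, is the only genuinely delicate point. Once this domain bookkeeping is settled, the symmetry of the weights together with the defining relation of $d^{*,1}_{k}$ deliver the formula.
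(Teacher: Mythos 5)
Your argument is correct and follows essentially the same route as the paper's proof: unfold the weighted inner products, use the symmetry of $w_k$ and $w_{k-1}$ with respect to the unweighted product to identify the unweighted adjoint, and then invert $w_{k-1}$. Your additional attention to the domain correspondence $\beta \in \dom d^{*,w}_{k} \iff w_k \beta \in \dom d^{*,1}_{k}$ is a point the paper's chain of equalities leaves implicit, and it is handled correctly.
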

\begin{proof}
By the definition of the weighted inner product and adjoint operators, we have that
\begin{align*}
\langle d^{k-1}\alpha, \beta \rangle_{\Lambda^{k}_w(\Omega)} &= \langle w_k d^{k-1}\alpha, \beta \rangle_{\Lambda^{k}(\Omega)} \\
&= \langle w_k d^{k-1} (w_{k-1}^{-1} w_{k-1}) \alpha, \beta \rangle_{\Lambda^{k}(\Omega)} \\
&= \langle (w_k d^{k-1} (w_{k-1})^{-1}) w_{k-1} \alpha, \beta \rangle_{\Lambda^{k}(\Omega)} \\
&= \langle w_{k-1} \alpha, (w_k d^{k-1} w_{k-1}^{-1})^* \beta \rangle_{\Lambda^{k-1}(\Omega)} \\
&= \langle \alpha, w_{k-1}^{-1} d^{*,1}_{k} w_{k} \; \beta  \rangle_{\Lambda^{k-1}_w(\Omega)}.
\end{align*}
In the final equality, we used the symmetry of $w_{k}$ and $w_{k - 1}^{-1}$. 
The result \eqref{eq: weighted adjoint} now follows.
\end{proof}
The requirement for the weights to be bijective is evident from this proposition, as the expression for $d^{*,w}_{k}$ involves the inverse $w_{k-1}^{-1}$, which we require to be bounded. The differential together with the codifferential defines the (weighted) \emph{Hodge-Laplacian} of the exterior derivative,
\begin{equation}
\Delta_{d,w}^k = d^{k-1} d^{*,w}_{k} + d^{*,w}_{k+1} d^k.   
\end{equation}
The exterior derivative together with the associated codifferential and Hodge-Laplacian gives an orthogonal decomposition of differential forms which generalizes the Helmholtz decomposition for vector fields, called the \emph{Hodge decomposition}:
\begin{equation}
L^2 \Lambda^k_w(\Omega) = \im d^{k-1} \oplus \ker \Delta_{d,w}^k \oplus \im d^{*,w}_{k+1}.
\end{equation}
Furthermore, we can identify the kernel of the Hodge-Laplacian with the cohomology of the manifold: $\ker \Delta_d^k \cong \ker d^k / \im d^{k-1}$.
%\jmn{Do we need to reserve the letter $H$ here for the cohomology? It may be equally useful (either in the below or in some paper referring to this) to explicitly denote the Hilbert complex, i.e. the domain complex of the L2 complex}.
For contractible domains, the de Rham cohomology vanishes for $k>0$, and therefore we can write any differential form as the image of $d$ and $d^*$, i.e. we write $\alpha = d\beta + d^* \eta$.
\section{The Hodge-Laplacian on the \texorpdfstring{$L^2$}{L2} \v{C}ech-de Rham complex}
\label{sec: HL on CdR}\noindent
In this section, we construct inner products on the \v{C}ech-de Rham complex and show that it forms a Hilbert complex, i.e. a cochain complex where each space is a Hilbert space and the differential operators are closed, densely-defined unbounded linear operators. 

Moreover, we define the Hodge-Laplace operator for the \v{C}ech-de Rham complex and show that the complex satisfies the compactness property. Several theoretical results that hold for arbitrary closed Hilbert complexes are then summarized. In particular, we obtain the well-posedness of the Hodge-Laplace problem of the \v{C}ech-de Rham complex, meaning that mathematical models that fit into this framework will be well-posed.

\subsection{The \texorpdfstring{$L^2$}{L2} \v{C}ech-de Rham complex}\noindent
In this section, we combine \cref{sub: CdR} and \cref{sub: exterior derivative HL} to make the \v{C}ech-de Rham complex into a Hilbert complex. For a given open cover $\Uc$, we start by defining the Hilbert spaces containing square-integrable forms $L^2 \Ac^{p, q}$ and the spaces formed by the anti-diagonals of the complex $L^2 \Ac^k$ with $k = p+q$:
%The corresponding weighted inner product $\langle \cdot, \cdot \rangle_{\Ac^k_w}$ is given as follows:
\begin{align}
L^2 \Ac^{p, q} &:= \bigoplus_{i \in \Ic^p} L^2 \Lambda^q(U_i), &
L^2 \Ac^k := \bigoplus_{p + q = k} L^2 \Ac^{p, q}.
\end{align}
In order to define a weighted inner product on the \v{C}ech-de Rham complex, we require a collection $\{w_k\}_{k \in \Z}$, where each $w_k$ consists of a set of $k+1$ weights:
\begin{equation}
 w_k = \{w^k_{i}: L^2 \Lambda^q(U_i) \to L^2 \Lambda^q(U_i): i \in \Ic^p, p \in \{0, ..., k\}, p+q = k\},
\end{equation}
where each $w^k_i$ is a bijective, symmetric, bounded linear operator as before. The weighted inner product is then defined as
\begin{equation}
\langle \alpha, \beta \rangle_{\Ac^k_w} = \langle w_k \alpha, \beta \rangle_{\Ac^k} = \sum_{p=0}^k \sum_{i \in \Ic^{p}}  \langle w^k_{i} \alpha_i, \beta_i \rangle_{\Lambda^{q}(U_i)}.    
\end{equation}
As an example, on an open cover $\Uc = \{U_0, U_1\}$, the inner products for $f = (f_0, f_1), g=(g_0, g_1) \in \Ac^0$, are given by
\begin{align}
\langle f, g \rangle_{\Ac^0_w} &= \langle w^0_{0} f_0, g_0 \rangle_{\Lambda^0(U_0)} + \langle w^0_{1} f_1, g_1 \rangle_{\Lambda^0(U_1)}.
\end{align} 
Similarly for $\alpha =((\alpha_0, \alpha_1), \alpha_{0,1}), \beta = ((\beta_0, \beta_1), \beta_{0,1}) \in \Ac^1 = \Ac^{0,1} \oplus \Ac^{1,0}$, we have the following expression for the inner product:
\begin{align}
\langle \alpha, \beta \rangle_{\Ac^1_w} &= \langle w^1_{0} \alpha_0, \beta_0 \rangle_{\Lambda^1(U_0)} + \langle w^1_{1} \alpha_1, \beta_1 \rangle_{\Lambda^1(U_1)} + \langle w^1_{0,1} \alpha_{0, 1}, \beta_{0, 1} \rangle_{\Lambda^0(U_{0, 1})}.
\end{align}
%We often use the shorthand notation $\langle \cdot, \cdot \rangle_{U_i}$ for $\langle \cdot, \cdot \rangle_{\Lambda^q(U_i)}$ if the order of the differential forms is clear from context.
We define the \emph{$L^2$ \v{C}ech-de Rham complex} as $L^2 \Ac^k$ equipped with the total differential $D^k := d + (-1)^k \delta$. The total differential $D^k$ is not well-defined on the entire Hilbert space $L^2 \Ac^k$. However, we show that it is a closed densely-defined unbounded linear operator, see e.g. \cite[Chap. 3]{arnoldFEEC}, which is necessary to have a Hilbert complex.
\begin{proposition} \label{prop: CDUL}
$D^k: \dom D^k \subset L^2 \Ac^k \to L^2 \Ac^{k + 1}$ is a closed densely defined unbounded linear operator (CDUL for short).
\end{proposition}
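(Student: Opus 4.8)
The plan is to split the total differential as $D^k = d + (-1)^k\delta$ and to handle the two summands separately: $d$ is the genuinely unbounded part, while $\delta$ is a bounded, everywhere-defined operator. The closedness, density and unboundedness of $D^k$ will then follow from the corresponding properties of $d$ together with the elementary fact that a bounded perturbation of a closed densely-defined operator is again closed and densely-defined, with unchanged domain. Accordingly, I would define $\dom D^k := \dom d$, since the condition $D^k\alpha \in L^2\Ac^{k+1}$ is equivalent to $d\alpha \in L^2\Ac^{k+1}$ once $\delta$ is known to map into $L^2\Ac^{k+1}$ unconditionally.

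First I would treat $d$. On each summand $L^2\Lambda^q(U_i)$, the exterior derivative equipped with the domain $\{u \in L^2\Lambda^q(U_i) : du \in L^2\Lambda^{q+1}(U_i)\}$ is the prototypical closed, densely-defined unbounded operator of the de Rham complex; see \cite[Chap.~3]{arnoldFEEC}. Since $L^2\Ac^k$ and $L^2\Ac^{k+1}$ are \emph{finite} direct sums of such spaces (finiteness of $\Ic$, hence of each $\Ic^p$) and $d$ acts diagonally on them, preserving $p$ and raising $q$ by one, the operator $d$ on $L^2\Ac^k$ is a finite direct sum of closed densely-defined operators and is therefore itself closed and densely-defined; its domain contains the smooth forms on each $U_i$, which are dense. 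Next I would show that $\delta$ is bounded and defined on all of $L^2\Ac^k$. By \eqref{eq: difference operator}, $\delta$ is a finite alternating sum of restriction maps $L^2\Lambda^q(U_j) \to L^2\Lambda^q(U_i)$ with $U_i \subseteq U_j$, and each such restriction satisfies $\|u|_{U_i}\|_{\Lambda^q(U_i)} \le \|u\|_{\Lambda^q(U_j)}$ in the unweighted norm; the two-sided bounds furnished by the bijective, bounded, symmetric weights transfer this estimate to the weighted norms up to a constant. A finite sum of bounded maps being bounded, $(-1)^k\delta$ is a bounded linear operator from $L^2\Ac^k$ to $L^2\Ac^{k+1}$.

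Finally I would combine the two facts. Because $(-1)^k\delta$ is bounded and everywhere-defined, $\dom D^k = \dom d$, which is dense, and the addition of $(-1)^k\delta$ does not alter unboundedness, since $d$ is already unbounded on the infinite-dimensional form spaces. For closedness, suppose $\alpha_n \in \dom d$ with $\alpha_n \to \alpha$ in $L^2\Ac^k$ and $D^k\alpha_n \to \beta$ in $L^2\Ac^{k+1}$; boundedness gives $(-1)^k\delta\alpha_n \to (-1)^k\delta\alpha$, hence $d\alpha_n = D^k\alpha_n - (-1)^k\delta\alpha_n \to \beta - (-1)^k\delta\alpha$, and closedness of $d$ yields $\alpha \in \dom d = \dom D^k$ with $D^k\alpha = \beta$. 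I expect the only genuinely technical point to be the verification that the weighted restriction maps comprising $\delta$ are bounded; everything else reduces to the known closedness of $d$ on the de Rham complex and the bounded-perturbation argument, with the sign factor $(-1)^k$ playing no role since scaling preserves boundedness.
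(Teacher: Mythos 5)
Your proposal is correct and follows essentially the same route as the paper: identify $\dom D^k$ with $\dom d$ using that $\delta$ is everywhere-defined and bounded, get density from the density of $\dom d$, and get closedness by peeling off the continuous $\delta$-part from a convergent sequence and invoking the closedness of $d$. The only difference is that you spell out why the restriction maps making $\delta$ bounded are contractions (and how the weights preserve this), a point the paper takes for granted under the phrase ``continuity of $\delta$.''
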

\begin{proof}
There are three properties that need to be addressed: closed, densely defined and linear. Having an unbounded operator in not necessary, as a bounded operator can also be a CDUL operator. However, $D^k$ will be unbounded because the exterior derivative is unbounded. Linearity is obvious, since the sum of two linear operators is again linear.

We will show that $D^k$ is densely defined in $L^2 \Ac^k$. For a given $k$, the exterior derivative $d^q$ is a CDUL operator on $L^2 \Ac^{p, q}$ for each pair $(p,q)$ satisfying $p+q=k$. By direct summation, it follows that the sum $\oplus_q d^q$ is a CDUL on $L^2 \Ac^k$. Next, we note that $\oplus_q \dom d^q \subset L^2 \Ac^k$ and $\oplus_p \dom \delta^p = L^2 \Ac^k$ and so $\dom D^k = \oplus_p \dom \delta^p \cap \oplus_q \dom d^q  = \oplus_q \dom d^q$, which is dense in $L^2 \Ac^k$. 

It remains to show that $D^k$ is closed. Consider a sequence $\alpha_1, \alpha_2, ... \in \dom D^k$ such that $\alpha_m \to \alpha$ and $D\alpha_m \to \beta$, as $m \to \infty$, for some $\alpha \in L^2 \Ac^k$ and $\beta \in L^2 \Ac^{k + 1}$. By the continuity of $\delta$, we have that $d \alpha_m \to (\beta - (-1)^k \delta \alpha) \in L^2 \Ac^{k + 1}$. Since $d^k$ is closed, it follows that $\alpha \in \dom d^k = \dom D^k$ and $d \alpha = \beta - (-1)^k \delta \alpha$, which implies $D \alpha = \beta$.
\end{proof}

\subsection{The total Hodge-Laplace operator}
\label{sub: HL on CdR}\noindent
We continue by defining the total codifferential $D_k^{*,w}: \dom D_k^{*,w} \subset L^2 \Ac^k \to L^2 \Ac^{k - 1}$ through the adjoint relationship:
\begin{equation} \label{eq: total codiff}
    \langle D_k^{*,w}\alpha, \beta \rangle_{\Ac^{k - 1}_w} = \langle \alpha, D^{k - 1} \beta \rangle_{\Ac^k_w}
\end{equation}
\begin{remark} \label{remark: codifferential CDUL}
    The codifferential $D_k^{*,w}$ is also a CDUL on $L^2 \Ac^k$ by \cite[Prop. 3.3]{arnoldFEEC}.
\end{remark}
We distinguish the adjoint corresponding to the unweighted inner product and the adjoint corresponding to the weighted inner product, and we write $D^{*, 1}_{k}$ to emphasize the unweighted adjoint when necessary.

Similarly to how we define the adjoint of the exterior derivative, the adjoint of the difference operator is defined through the equation $\langle \delta \alpha, \beta \rangle_{\Ac^{p,q}} = \langle \alpha, \delta^* \beta \rangle_{\Ac^{p-1,q}}$. We can describe the adjoint of the difference operator $\delta$ with the characteristic function. 

\begin{proposition}\label{prop: total diff adj}
The adjoint of the total differential takes the following form:
\begin{equation}
D^{*,w}_k = d^{*,w} - (-1)^k \delta^{*,w}.  
\end{equation}
\end{proposition}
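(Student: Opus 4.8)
The plan is to compute $D^{*,w}_k$ directly from its defining relation \eqref{eq: total codiff}, splitting the total differential into its $d$- and $\delta$-parts and applying the individual adjoint relations. Writing $D^{k-1} = d + (-1)^{k-1}\delta$ and using bilinearity of the weighted inner product, I would first expand
\[
\langle \alpha, D^{k-1}\beta \rangle_{\Ac^k_w} = \langle \alpha, d\beta \rangle_{\Ac^k_w} + (-1)^{k-1}\langle \alpha, \delta\beta \rangle_{\Ac^k_w}.
\]

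Next I would transfer each term to the left argument using the relation defining the weighted codifferential $d^{*,w}$ and the weighted analogue of $\langle \delta\alpha, \beta\rangle = \langle \alpha, \delta^{*}\beta\rangle$ defining $\delta^{*,w}$, obtaining
\[
\langle \alpha, D^{k-1}\beta \rangle_{\Ac^k_w} = \langle d^{*,w}\alpha, \beta \rangle_{\Ac^{k-1}_w} + (-1)^{k-1}\langle \delta^{*,w}\alpha, \beta \rangle_{\Ac^{k-1}_w}.
\]
Since $(-1)^{k-1} = -(-1)^k$, the right-hand side collapses to $\langle (d^{*,w} - (-1)^k\delta^{*,w})\alpha, \beta\rangle_{\Ac^{k-1}_w}$, and comparison with \eqref{eq: total codiff}, valid for all $\beta$ in the dense domain $\dom D^{k-1}$, identifies $D^{*,w}_k = d^{*,w} - (-1)^k\delta^{*,w}$.

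The only delicate point is the treatment of domains, because $D^k$ and $d^{*,w}$ are unbounded. Here I would use that $\delta$, being a finite alternating sum of restriction maps, is a bounded operator on $L^2\Ac^k$, so its weighted adjoint $\delta^{*,w}$ is everywhere defined and bounded. Adding a bounded operator does not change the domain of an adjoint, so $(d + (-1)^k\delta)^{*} = d^{*} + (-1)^k\delta^{*}$ on $\dom d^{*}$, and the manipulation above is legitimate on $\dom D^{*,w}_k = \dom d^{*,w}$. I expect this domain bookkeeping — rather than the sign arithmetic — to be the main obstacle, and the boundedness of $\delta$ is precisely what removes it.
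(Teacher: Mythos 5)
Your proposal is correct and follows essentially the same route as the paper's proof: split $D^{k-1}$ into its $d$- and $\delta$-parts, move each across the weighted inner product via the respective adjoint relations, and collect the sign $(-1)^{k-1} = -(-1)^k$. Your additional remark that the boundedness of $\delta$ makes $\delta^{*,w}$ everywhere defined and hence leaves $\dom D^{*,w}_k = \dom d^{*,w}$ unchanged is a point the paper's proof passes over silently, and it is exactly the right justification for why the formal manipulation is legitimate.
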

\begin{proof}
The result follows immediately from the linearity of the inner product and the definition of the adjoint
\begin{align*}
\langle \alpha,  D^{*,w}_{k} \beta \rangle_{\Ac^{k-1}_w}  &= \langle D^{k-1} \alpha, \beta \rangle_{\Ac^k_w} \\
 &= \langle d\alpha, \beta \rangle_{\Ac^k_w} + \langle  (-1)^{k-1} \delta \alpha, \beta \rangle_{\Ac^k_w} \\
 &= \langle \alpha, d^{*,w} \beta \rangle_{\Ac^{k-1}_w} + \langle \alpha, (-1)^{k-1} \delta^{*,w} \beta \rangle_{\Ac^{k-1}_w} \\
 &= \langle \alpha, d^{*,w} - (-1)^{k} \delta^{*,w} \beta \rangle_{\Ac^{k-1}_w}.
\end{align*}    
\end{proof}

Following \Cref{sub: exterior derivative HL}, the unweighted and weighted Hodge-Laplace operators for the \v{C}ech-de Rham complex are defined as
\begin{align}
    \Delta_{D,1}^{k} &:= D^{k - 1}D_k^{*,1} + D_{k + 1}^{*,1} D^k,\\
    \Delta_{D,w}^k &:= D^{k - 1}D_{k}^{*,w} + D_{k + 1}^{*,w} D^k.
\end{align}
While the weights are important in applications, from the perspective of analysis the distinction is typically not critical, and when no confusion arises, we will in the continuation omit both the weight and the degree $k$ in the notation of $D$, $D^*$, and $\Delta_D$. 
The unweighted and weighted Hodge-Laplace problem are then both defined as follows: Given $\varphi \perp \ker \Delta_{D}$, find $\alpha \in \dom \Delta_{D}$ such that:
\begin{align} \label{eq: total HL}
    \Delta_{D} \alpha &= \varphi, &
    \alpha &\perp \ker \Delta_{D}.
\end{align}

The operators $\Delta_{D, w}$ and $\Delta_{D,1}$ can be explicitly related using the knowledge of the exterior derivative $d$ and the difference operator $\delta$, according to the following two propositions.
\begin{proposition}
The weighted Hodge-Laplacian on the \v{C}ech-de Rham complex decomposes to the weighted Hodge-Laplacians of $d$ and $\delta$ plus four coupling terms:
%\jmn{1) There is a sign error in the coupling terms in the proof. 2) The weighted normal differentials and Hodge-Laplacians are not defined. I think the best is introduce the weights and so forth already in section 1.2, and then streamline the presentation of this. 3) Proposition 2.6 is a generalization of 2.5, and the calculations in the proofs are identical. I believe it is better to first provide Proposition 2.6, and then give Proposition 2.5 as a corollary based on the commutativity of the unweighted (co-)differentials}.
\begin{equation}\label{eq: weighted hodge-laplacian}
    \Delta_{D,w} = \Delta_{d,w} + \Delta_{\delta,w}+ (-1)^k (d^{*,w} \delta 
    - \delta d^{*,w}
    + \delta^{*,w} d
    - d \delta^{*,w}).
\end{equation}
\end{proposition}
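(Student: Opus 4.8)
The plan is to prove the identity by direct expansion, substituting the definition $D^k = d + (-1)^k\delta$ together with the expression $D^{*,w}_k = d^{*,w} - (-1)^k\delta^{*,w}$ for the total codifferential furnished by \Cref{prop: total diff adj}. Writing out $\Delta_{D,w}^k = D^{k - 1}D_k^{*,w} + D_{k + 1}^{*,w} D^k$ in these terms, the whole argument reduces to careful sign bookkeeping.

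First I would expand the left summand. Using $D^{k-1} = d + (-1)^{k-1}\delta$ and $D^{*,w}_k = d^{*,w} - (-1)^k\delta^{*,w}$, multiplying out produces the four terms $dd^{*,w}$, $-(-1)^k d\delta^{*,w}$, $(-1)^{k-1}\delta d^{*,w}$, and $-(-1)^{k-1}(-1)^k\delta\delta^{*,w}$. Simplifying the signs via $(-1)^{k-1} = -(-1)^k$ and $(-1)^{2k-1} = -1$ yields
\[
D^{k-1}D^{*,w}_k = dd^{*,w} + \delta\delta^{*,w} - (-1)^k\bigl(d\delta^{*,w} + \delta d^{*,w}\bigr).
\]
Next I would expand the right summand $D^{*,w}_{k+1}D^k$ in the same manner, with $D^{*,w}_{k+1} = d^{*,w} - (-1)^{k+1}\delta^{*,w}$ and $D^k = d + (-1)^k\delta$. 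After reducing the signs using $(-1)^{k+1} = -(-1)^k$ and $(-1)^{2k+1} = -1$, this gives
\[
D^{*,w}_{k+1}D^k = d^{*,w}d + \delta^{*,w}\delta + (-1)^k\bigl(d^{*,w}\delta + \delta^{*,w}d\bigr).
\]

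Adding the two expressions, the pure terms $dd^{*,w} + d^{*,w}d$ and $\delta\delta^{*,w} + \delta^{*,w}\delta$ assemble into $\Delta_{d,w}$ and into the analogously defined Hodge-Laplacian $\Delta_{\delta,w}$ of the difference operator, while the four mixed terms combine to exactly $(-1)^k(d^{*,w}\delta - \delta d^{*,w} + \delta^{*,w}d - d\delta^{*,w})$, matching the claimed coupling up to the order in which the terms are listed.

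The computation itself is routine; genuine care is needed in only two places. The first is the sign tracking just described, where the degree shifts forcing the exponents $k-1$, $k$, and $k+1$ to appear must be reconciled against the single factor $(-1)^k$ in the target. The second, and the point I expect to require the most attention, is well-definedness: since $d$ and $\delta$ raise the de Rham and \v{C}ech degrees respectively, each composite such as $d^{*,w}\delta$ or $\delta d^{*,w}$ maps a summand $\Ac^{p,q}$ to $\Ac^{p+1,q-1}$, staying on the anti-diagonal $p+q=k$ but relocating within it; one should verify that every product appearing is read on the correct component and on the intersection of the relevant domains, so that the operator identity holds on $\dom\Delta_{D,w}^k$. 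Because $d$ and $\delta$ commute and the weighted adjoints are defined summand-wise, these domain matters amount to bookkeeping and cause no essential difficulty.
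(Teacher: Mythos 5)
Your proposal is correct and follows essentially the same route as the paper: substitute $D^k = d + (-1)^k\delta$ and the expression for $D^{*,w}$ from the preceding proposition into $\Delta_{D,w}^k = D^{k-1}D_k^{*,w} + D_{k+1}^{*,w}D^k$, expand, and collect the pure terms into $\Delta_{d,w}$ and $\Delta_{\delta,w}$ and the mixed terms into the $(-1)^k$-weighted coupling. Your sign bookkeeping matches the paper's computation, and your added remarks on domains and degree placement go slightly beyond what the paper records but do not change the argument.
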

\begin{proof}
We substitute the definition of $D^k$ and the expression for the weighted adjoint $D^{*,w}_k$ given in \cref{prop: total diff adj} into the definition of the Hodge-Laplace operator to get
%\begin{align*}    
%\Delta_D^k &= ((w_{k}^{-1})^* D_{k+1}^* w_{k+1}^*) D^k + D^{k-1} ((w_{k-1}^{-1})^* D_k^* w_{k}^*) \\
%&= \left( (w_{k}^{-1})^* (d^* + (-1)^{k} \delta^*) w_{k+1}^* \right) (d + (-1)^k \delta) \\
%&+ (d + (-1)^{k-1} \delta) ((w_{k-1}^{-1})^* (d^* + (-1)^{k-1} \delta^*) w_{k}^*) \\
%&= (w_k^{-1})^* d^* w_{k+1}^* d + (-1)^k (w_k^{-1})^* d^* w_{k+1}^* \delta \\
%&+ (-1)^{k} (w_k^{-1})^* \delta^* w_{k+1}^* d + (w_k^{-1})^* \delta^* w_{k+1}^* \delta \\
%&+ d (w_{k-1}^{-1})^* d^* w_k^* \\
%&+ (-1)^{k-1} d^ (w_{k-1}^{-1})^* \delta^* w_k^* \\
%&+ (-1)^{k-1} \delta (w_{k-1}^{-1})^* d^* w_k^* \\
%&+ \delta (w_{k-1}^{-1})^* \delta^* w_k^*.
%\end{align*}   
\begin{align*}    
\Delta_{D,w}^k &= D^{*,w}_{k+1} D^k + D^{k-1} D^{*,w}_k \\
&= \left( d^{*,w} + (-1)^k \delta^{*,w} \right) (d + (-1)^k \delta) 
+ (d + (-1)^{k-1} \delta) (d^{*,w} + (-1)^{k-1}\delta^{*,w} ) \\
&= d^{*,w}d + d d^{*,w} + \delta^{*,w} \delta + \delta \delta^{*,w} 
+ (-1)^k (d^{*,w} \delta + \delta^{*,w} d - d \delta^{*,w} - \delta d^{*,w}).
\end{align*}  
Reordering terms and identifying $d^{*,w}d + d d^{*,w}$ with $\Delta_{d,w}$ and $\delta^{*,w} \delta + \delta \delta^{*,w}$ with $\Delta_{\delta, w}$, we arrive at
\begin{equation*}
    \Delta_{D,w} = \Delta_{d,w} + \Delta_{\delta,w}+ (-1)^k (d^{*,w} \delta 
    - \delta d^{*,w}
    + \delta^{*,w} d
    - d \delta^{*,w}).
\end{equation*}
\end{proof}

The unweighted Hodge-Laplacian takes a simpler form, as pointed out in the next corollary.
\begin{corollary} \label{prop: decomposition HL}
The unweighted Hodge-Laplacian on the \v{C}ech-de Rham complex decomposes to the sum of the unweighted Hodge-Laplacians of $d$ and $\delta$:
\begin{equation}
    \Delta_{D,1} = \Delta_{d,1} + \Delta_{\delta,1}.
\end{equation}
\end{corollary}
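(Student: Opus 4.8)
The statement is a corollary of the preceding proposition, so the plan is to specialize the weighted identity to unit weights and argue that the four coupling terms cancel. Setting each $w^k_i = \id$ gives $d^{*,w} = d^{*,1}$ and $\delta^{*,w} = \delta^{*,1}$, so the displayed formula reduces to
\[
\Delta_{D,1} = \Delta_{d,1} + \Delta_{\delta,1} + (-1)^k\big(d^{*,1}\delta - \delta d^{*,1} + \delta^{*,1}d - d\delta^{*,1}\big).
\]
Thus the entire content is to show that the two commutators $d^{*,1}\delta - \delta d^{*,1}$ and $\delta^{*,1}d - d\delta^{*,1}$ vanish. A degree bookkeeping is worth recording first: on $\Ac^k$ the first commutator maps $\Ac^{p,q}\to\Ac^{p+1,q-1}$ while the second maps $\Ac^{p,q}\to\Ac^{p-1,q+1}$, so they land in different summands and cannot cancel one another; each must therefore vanish separately.

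I would next observe that the two commutators are adjoint to one another: taking the unweighted adjoint of $d^{*,1}\delta - \delta d^{*,1}$ yields $\delta^{*,1}d - d\delta^{*,1}$, since $(d^{*,1})^* = d$ and $(\delta^{*,1})^* = \delta$. Hence it suffices to establish a single commutation relation, say $d^{*,1}\delta = \delta d^{*,1}$. I stress that this does \emph{not} follow formally from the known identity $d\delta = \delta d$: commutation is not inherited by passing to adjoints, and indeed taking adjoints of $d\delta = \delta d$ produces only $\delta^{*,1}d^{*,1} = d^{*,1}\delta^{*,1}$, a different relation. Genuine geometric input is therefore needed.

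The mechanism I would use is that, in the unit-weight inner product, the codifferential acts fibrewise over the \v{C}ech index: on $\Ac^{p,q} = \bigoplus_{i\in\Ic^p}\Lambda^q(U_i)$ the operator $d^{*,1}$ is the direct sum of the $L^2$-codifferentials $d^{*,1}_{U_i}$ on the individual pieces, whereas $\delta$ is a signed sum of the restriction maps and leaves the form degree $q$ untouched. Expanding $(d^{*,1}\delta\beta)_i$ and $(\delta d^{*,1}\beta)_i$ componentwise reduces the claim to the single assertion that the codifferential commutes with restriction to a smaller overlap, $d^{*,1}_{U_i}\big(\beta|_{U_i}\big) = \big(d^{*,1}_{U_{i\setminus i_j}}\beta\big)|_{U_i}$. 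Because the codifferential is a local differential operator, given pointwise by the formal metric expression, this is immediate for the formal codifferential, as restriction commutes with every local operator.

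The step I expect to be the real obstacle is the functional-analytic one: $d^{*,1}$ is defined as an adjoint and so carries natural boundary conditions in its domain, which means the fibrewise identity must be read as an identity of unbounded operators whose domains must be reconciled. The clean way to handle this is to verify the commutation on a common core of smooth forms, where $d^{*,1}_{U_i}$ agrees with the formal codifferential and the restriction argument applies verbatim, and then to extend to the full domains using that $d$, $\delta$, and their adjoints are closed, as recorded in \Cref{prop: CDUL} and \Cref{remark: codifferential CDUL}. Once $d^{*,1}\delta = \delta d^{*,1}$ is secured on the appropriate domain, the adjoint relation supplies $\delta^{*,1}d = d\delta^{*,1}$, all four coupling terms drop out, and the decomposition $\Delta_{D,1} = \Delta_{d,1} + \Delta_{\delta,1}$ follows.
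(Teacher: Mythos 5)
Your proof is correct and takes essentially the same route as the paper: specialize the weighted identity \eqref{eq: weighted hodge-laplacian} to unit weights and observe that the two coupling commutators $d^{*,1}\delta - \delta d^{*,1}$ and $\delta^{*,1}d - d\delta^{*,1}$ vanish on $\dom \Delta_{D,1}^k$, so only $\Delta_{d,1}+\Delta_{\delta,1}$ survives. The paper's proof simply asserts these commutation relations in one sentence, whereas you additionally justify them (adjointness reducing to a single commutator, locality of the codifferential under restriction, and a core argument for the domains); this is consistent with, and somewhat more careful than, the published argument.
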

\begin{proof}
For $\alpha \in \dom(\Delta_{D,1}^k)$, we have that $d\delta^*\alpha  =\delta^* d \alpha$ as well as $d^*\delta \alpha = \delta d^* \alpha$, and hence the coupling terms in \cref{eq: weighted hodge-laplacian} cancel out, and we are left with the stated corollary.
%$\Delta_D = \Delta_d + \Delta_\delta$. 
\end{proof}

The diagram below illustrates how the different components of the weighted Hodge-Laplacian acts on differential forms of degree $(p,q)$. The vertical and horizontal maps are the Hodge-Laplacians $\Delta_{d,w}$ and $\Delta_{\delta,w}$, respectively, which increases or decreases the degree of $q$ or $p$ by one, then maps back to $\Ac^{p,q}$. The coupling terms preserves the total degree $k$, but  with codomain $\Ac^{p-1, q+1}$ and $\Ac^{p+1, q-1}$.

% https://q.uiver.app/#q=WzAsNyxbMiwyLCJcXG1hdGhjYWx7QX1ee3AscX0iXSxbNCw0LCJcXG1hdGhjYWx7QX1ee3ArMSxxLTF9Il0sWzAsMCwiXFxtYXRoY2Fse0F9XntwLTEscSsxfSJdLFsyLDAsIlxcYnVsbGV0Il0sWzIsNCwiXFxidWxsZXQiXSxbNCwyLCJcXGJ1bGxldCJdLFswLDIsIlxcYnVsbGV0Il0sWzAsMiwiKC0xKV5rKFxcZGVsdGFeeyosd31kIC0gZCBcXGRlbHRhXnsqLHd9KSIsMCx7ImxhYmVsX3Bvc2l0aW9uIjo5MH1dLFswLDEsIigtMSleayhkXnsqLHd9XFxkZWx0YSAtIFxcZGVsdGEgZF57Kix3fSkiLDAseyJsYWJlbF9wb3NpdGlvbiI6OTB9XSxbMCw0LCJkZF57Kix3fSIsMix7InN0eWxlIjp7InRhaWwiOnsibmFtZSI6ImFycm93aGVhZCJ9fX1dLFswLDMsImReeyosd31kIiwyLHsic3R5bGUiOnsidGFpbCI6eyJuYW1lIjoiYXJyb3doZWFkIn19fV0sWzAsNSwiXFxkZWx0YV57Kix3fVxcZGVsdGEiLDIseyJzdHlsZSI6eyJ0YWlsIjp7Im5hbWUiOiJhcnJvd2hlYWQifX19XSxbMCw2LCJcXGRlbHRhXFxkZWx0YV57Kix3fSIsMix7InN0eWxlIjp7InRhaWwiOnsibmFtZSI6ImFycm93aGVhZCJ9fX1dXQ==
\begin{equation}
\begin{tikzcd}[column sep=scriptsize]
	{\mathcal{A}^{p-1,q+1}} && \bullet \\
	\\
	\bullet && {\mathcal{A}^{p,q}} && \bullet \\
	\\
	&& \bullet && {\mathcal{A}^{p+1,q-1}}
	\arrow["{(-1)^k(\delta^{*,w}d - d \delta^{*,w})}"{pos=0.8}, from=3-3, to=1-1]
	\arrow["{(-1)^k(d^{*,w}\delta - \delta d^{*,w})}"{pos=0.8}, from=3-3, to=5-5]
	\arrow["{dd^{*,w}}"', leftrightarrow, from=3-3, to=5-3]
	\arrow["{d^{*,w}d}"', leftrightarrow, from=3-3, to=1-3]
	\arrow["{\delta^{*,w}\delta}"', leftrightarrow, from=3-3, to=3-5]
	\arrow["{\delta\delta^{*,w}}"', leftrightarrow, from=3-3, to=3-1]
\end{tikzcd}   
\end{equation}

\subsection{Results from Hilbert complex theory}
In the following section we show that the $L^2$ \v{C}ech-de Rham complex is not just a Hilbert complex, but it also satisfies the compactness property, which in turn implies that the range of $D^k$ is closed in $L^2 \Ac^{k+1}$. As a consequence, general results for Hilbert complexes with the compactness property can now be directly applied to all coupled problems that can be identified as a Hodge-Laplace problem on a \v{C}ech-de Rham complex.

\begin{theorem} \label{thm: CdR has the compactness property}
The $L^2$ \v{C}ech-de Rham complex has the \emph{compactness property}, i.e. the inclusion $\dom(D^k) \cap \dom(D^*_k) \subset L^2 \Ac^k$ is compact for all $k$.
\end{theorem}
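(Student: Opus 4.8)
The plan is to exploit the fact that the only unbounded operator appearing in the total differential $D^k = d + (-1)^k \delta$ is the exterior derivative $d$: the difference operator $\delta$ is a finite alternating sum of restrictions and hence a bounded operator on $L^2 \Ac^k$, and its adjoint $\delta^{*}$ (weighted or not) is bounded as well. Consequently, I expect the compactness of the inclusion $\dom(D^k) \cap \dom(D^*_k) \subset L^2 \Ac^k$ to reduce, block by block, to the classical compactness property of the de Rham complex on each individual patch $U_i$, of which there are only finitely many because the cover is finite.

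Concretely, I would first record that boundedness of $\delta$ gives $\dom(D^k) = \dom(d)$ and that boundedness of $\delta^*$ gives $\dom(D^*_k) = \dom(d^*)$, where $d$ and $d^*$ act componentwise on the decomposition $L^2 \Ac^k = \bigoplus_{p+q=k} \bigoplus_{i \in \Ic^p} L^2 \Lambda^q(U_i)$; this is essentially the domain computation already carried out in the proof of \Cref{prop: CDUL}. Moreover, the graph norm $\|\alpha\|^2 + \|D\alpha\|^2 + \|D^*\alpha\|^2$ is equivalent to $\|\alpha\|^2 + \|d\alpha\|^2 + \|d^*\alpha\|^2$, since the cross terms involving $\delta$ and $\delta^*$ are bounded by $\|\alpha\|$. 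Because the weights $w^k_i$ and the operators $d, d^*$ all respect the block decomposition indexed by $(p,q,i)$, this yields an isomorphism of normed spaces
\[
\dom(D^k) \cap \dom(D^*_k) \;\cong\; \bigoplus_{p+q=k} \bigoplus_{i \in \Ic^p} \bigl( \dom(d^q_{U_i}) \cap \dom(d^{*,q}_{U_i}) \bigr),
\]
with equivalent norms and compatibly with the inclusion into $L^2 \Ac^k$.

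The crucial analytic input, and the step I expect to be the main obstacle, is the compactness property for the de Rham complex on a single patch: the inclusion $\dom(d^q_{U_i}) \cap \dom(d^{*,q}_{U_i}) \hookrightarrow L^2 \Lambda^q(U_i)$ is compact. This is the classical Rellich-type selection result for bounded Lipschitz domains, obtained from a Gaffney inequality together with the Rellich embedding, and I would invoke it while citing \cite{arnoldFEEC}; it relies on each $U_i$ being a bounded Lipschitz domain. Finally, since a finite direct sum of compact operators is compact and the cover is finite, compactness of the full inclusion follows from compactness on each block. The weighted case runs along the same lines, as each weight acts within a single block $L^2 \Lambda^q(U_i)$ and is bounded with bounded inverse, so the reduction to finitely many per-patch de Rham complexes is unaffected.
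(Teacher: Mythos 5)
Your proposal is correct and follows essentially the same route as the paper: reduce $\dom(D^k)\cap\dom(D^*_k)$ to $\dom(d)\cap\dom(d^*)$ using the boundedness of $\delta$ and $\delta^*$, invoke the classical compact embedding (Picard's selection theorem) on each patch, and conclude by finite direct summation. Your version is merely more explicit about the graph-norm equivalence and the role of the weights, which the paper leaves implicit.
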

\begin{proof}
As noted in \Cref{prop: CDUL}, $\dom D^{p,q} = \dom d^q$ and by analogous arguments, we have $\dom D^*_{p,q} = \dom d^*_q$ and thus, $\dom D^{p,q} \cap \dom D^*_{p,q} = \dom d^q \cap \dom d^*_q$, which is compactly embedded in each $L^2 \Ac^{p, q}$, see \cite{pauly2022hilbert1, picard1984}. The compact embedding in $L^2 \Ac^k$ then follows by direct summation.
\end{proof}

We now summarize some general results for Hilbert complexes with the compactness property in the following corollaries, with the understanding that for any of the concrete examples provided in \Cref{sec: examples}, sharper results may be obtained by exploiting problem-specific aspects of the examples.

\begin{corollary}[{\cite[Thm. 4.4]{arnoldFEEC}}]
The $L^2$ \v{C}ech-de Rham complex is a \emph{closed Hilbert complex}, meaning that the range of $D^k$ is closed in $L^2 \Ac^{k+1}$. Moreover, the $L^2$ \v{C}ech-de Rham complex is a \emph{Fredholm complex}, i.e. it has finite-dimensional cohomology. 
%and the range of each differential is closed.
\end{corollary}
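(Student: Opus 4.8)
The plan is to deduce the statement directly from the abstract theory of Hilbert complexes, since all the structural hypotheses have already been assembled. Concretely, I would first note that $L^2\Ac^\bullet$ together with the operators $D^k$ is a genuine Hilbert complex: each $L^2\Ac^k$ is a Hilbert space by construction, the complex property $D^{k+1}\circ D^k = 0$ was verified in \Cref{sub: CdR}, and $D^k$ (resp.\ $D^*_k$) is a closed densely-defined operator by \Cref{prop: CDUL} (resp.\ \Cref{remark: codifferential CDUL}). Together with \Cref{thm: CdR has the compactness property}, which supplies the compactness property, the hypotheses of \cite[Thm. 4.4]{arnoldFEEC} are met, and the conclusion follows. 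The remaining work is therefore only to recall why the compactness property yields closed range and finite-dimensional cohomology; I sketch the two mechanisms below.

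For the Fredholm property, I would identify the cohomology with the space of harmonic forms $\mathcal{H}^k := \ker D^k \cap \ker D^*_k \subset V^k$, where $V^k := \dom D^k \cap \dom D^*_k$ carries the graph inner product $\langle u, v\rangle + \langle D^k u, D^k v\rangle + \langle D^*_k u, D^*_k v\rangle$. On $\mathcal{H}^k$ this graph norm collapses to the $L^2$-norm, so the compactness property of \Cref{thm: CdR has the compactness property} forces the $L^2$-unit ball of $\mathcal{H}^k$ to be precompact. As $\mathcal{H}^k$ is closed in $L^2\Ac^k$, its unit ball is then compact, and Riesz's theorem gives $\dim \mathcal{H}^k < \infty$. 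Since $\ker D^k / \im D^{k-1} \cong \mathcal{H}^k$ once the range of $D^{k-1}$ is closed (established next), the cohomology is then finite-dimensional.

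For the closed-range (closed Hilbert complex) property, I would establish the Poincaré inequality $\|v\| \le c_P \|D^k v\|$ for all $v \in \dom D^k$ with $v \perp \ker D^k$, and conclude that $D^k$ is bounded below on $(\ker D^k)^\perp \cap \dom D^k$, whence $\im D^k$ is closed. The crux is to verify this inequality by contradiction using compactness. Suppose it fails; then there is a sequence $v_m \in \dom D^k$ with $v_m \perp \ker D^k$, $\|v_m\| = 1$, and $\|D^k v_m\| \to 0$. The key observation is that any such $v_m$ automatically lies in $\dom D^*_k$ with $D^*_k v_m = 0$: indeed $\im D^{k-1} \subseteq \ker D^k$, so $v_m \perp \ker D^k$ implies $v_m \perp \im D^{k-1}$, which is precisely the statement that $v_m \in \dom D^*_k$ with vanishing codifferential. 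Hence the $v_m$ are bounded in the $V^k$-graph norm, and the compactness property extracts a subsequence converging in $L^2\Ac^k$ to some $v$ with $\|v\| = 1$. Closedness of $D^k$ then yields $v \in \ker D^k$, while $v$ remains in the closed subspace $(\ker D^k)^\perp$, forcing $v = 0$, a contradiction.

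The main obstacle, and really the only non-bookkeeping point, is the observation in the last paragraph that orthogonality to $\ker D^k$ promotes an element of $\dom D^k$ into $V^k$; this is what licenses the use of the compactness property on a sequence that a priori controls only $D^k v_m$ and not $D^*_k v_m$. Everything else is the mechanical transcription of \cite[Thm. 4.4]{arnoldFEEC}, so once the inclusion $\mathcal{H}^k \subset V^k$ and the chain condition $\im D^{k-1}\subseteq \ker D^k$ are in hand, both assertions are immediate.
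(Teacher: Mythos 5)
Your proposal is correct and follows the same route as the paper: the paper offers no separate proof, simply observing that \Cref{prop: CDUL}, \Cref{remark: codifferential CDUL}, and \Cref{thm: CdR has the compactness property} put the $L^2$ \v{C}ech--de Rham complex within the hypotheses of \cite[Thm.~4.4]{arnoldFEEC}, which is exactly your first paragraph. Your additional sketch of the internal mechanism of that theorem (Riesz's theorem on the harmonic space, and the compactness-by-contradiction proof of the Poincar\'e inequality, including the key point that orthogonality to $\ker D^k$ places an element in $\dom D^*_k$ with vanishing codifferential) is accurate but goes beyond what the paper records.
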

\begin{corollary}[{\cite[Thm. 4.5]{arnoldFEEC}}] \label{cor: Hodge decomposition}
    For each $k$, we have the orthogonal \emph{Hodge decomposition}:
    \begin{equation}
    L^2 \Ac^k = \im D^{k-1}  \oplus \ker \Delta_D^k \oplus \im D^*_{k+1}.
    \end{equation}
\end{corollary}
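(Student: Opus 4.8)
The plan is to obtain the decomposition as a direct application of the abstract Hodge theory for \emph{closed} Hilbert complexes, since every hypothesis that argument requires is already in hand. \Cref{prop: CDUL} shows $D^k$ is CDUL, \Cref{thm: CdR has the compactness property} supplies the compactness property, and the preceding corollary (citing \cite[Thm. 4.4]{arnoldFEEC}) records that the complex is closed, i.e. $\im D^{k-1}$ is closed in $L^2 \Ac^k$. The only genuinely complex-specific input — closed range — has therefore already been secured, so the main obstacle is behind us and what remains is purely functional-analytic. I sketch that remaining argument below.

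First I would peel off the closed subspace $\ker D^k$ to write $L^2 \Ac^k = \ker D^k \oplus (\ker D^k)^\perp$. The complement is handled by the closed range theorem: since $\im D^k$ is closed, so is $\im D^*_{k+1}$, and one has $(\ker D^k)^\perp = \overline{\im D^*_{k+1}} = \im D^*_{k+1}$, which furnishes the third summand. Next I would decompose $\ker D^k$ itself. Because $D^k D^{k-1} = 0$ we have $\im D^{k-1} \subseteq \ker D^k$, and $\im D^{k-1}$ is closed by the previous corollary, so
\begin{equation*}
\ker D^k = \im D^{k-1} \oplus \mathfrak{H}^k, \qquad \mathfrak{H}^k := \ker D^k \cap (\im D^{k-1})^\perp .
\end{equation*}

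It then remains to identify the harmonic space $\mathfrak{H}^k$ with $\ker \Delta_D^k$. For the easy inclusion, if $\omega \in \dom \Delta_D^k$ satisfies $\Delta_D^k \omega = 0$ then $0 = \langle \Delta_D^k \omega, \omega \rangle = \|D^k \omega\|^2 + \|D^*_k \omega\|^2$, forcing $D^k \omega = 0$ and $D^*_k \omega = 0$. Conversely, the very definition of the unbounded adjoint gives that $\omega \in \ker D^k$ satisfies $\omega \perp \im D^{k-1}$ exactly when $\omega \in \dom D^*_k$ with $D^*_k \omega = 0$; hence $\mathfrak{H}^k = \ker D^k \cap \ker D^*_k$, and any such $\omega$ lies in $\ker \Delta_D^k$. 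This equivalence is precisely the point where the domain bookkeeping for the unbounded operators is resolved, and I would be careful to invoke the adjoint characterization rather than an integration-by-parts that presupposes extra regularity.

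Finally I would check mutual orthogonality of the three pieces: $\im D^{k-1} \perp \im D^*_{k+1}$ follows from $\langle D^{k-1}\alpha, D^*_{k+1}\beta \rangle = \langle D^k D^{k-1}\alpha, \beta \rangle = 0$, while $\ker \Delta_D^k = \mathfrak{H}^k$ is orthogonal to both summands by construction. Collecting the pieces yields $L^2 \Ac^k = \im D^{k-1} \oplus \ker \Delta_D^k \oplus \im D^*_{k+1}$. Since the decisive closed-range ingredient was established via the compactness property, no new estimates are needed and the statement reduces to the abstract machinery of \cite{arnoldFEEC}.
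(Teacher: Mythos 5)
Your proposal is correct and follows exactly the route the paper intends: the paper offers no proof beyond citing \cite[Thm.~4.5]{arnoldFEEC}, relying on the fact that the complex is closed (via \Cref{thm: CdR has the compactness property} and the preceding corollary), and your argument is a faithful unpacking of that abstract result, including the careful identification $\ker \Delta_D^k = \ker D^k \cap \ker D^*_k$ via the adjoint characterization rather than formal integration by parts. No gaps.
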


\begin{corollary}[{\cite[Thm. 4.6]{arnoldFEEC}}] \label{cor: Poincaré inequality}
    For each $k$, there exists a constant $C_k$ such that the \emph{Poincaré} inequality holds:
    \begin{align}
        \|\alpha\|_{\Ac^k} &\leq C_k \|D^k \alpha \|_{\Ac^{k+1}}, &
        \forall \alpha &\in \dom D^k \cap (\ker D^k)^\perp.
    \end{align}
\end{corollary}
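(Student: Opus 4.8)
The plan is to derive the inequality by a compactness argument that leverages \Cref{thm: CdR has the compactness property} together with the closed-range property established in the preceding corollary. First I would argue by contradiction: if no such constant $C_k$ exists, then there is a sequence $\alpha_m \in \dom D^k \cap (\ker D^k)^\perp$ with $\|\alpha_m\|_{\Ac^k} = 1$ and $\|D^k \alpha_m\|_{\Ac^{k+1}} \to 0$ as $m \to \infty$. The goal is to extract from $\{\alpha_m\}$ a subsequence converging to a nonzero element that is forced to be both in $\ker D^k$ and in its orthogonal complement, which is impossible.

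The key preliminary step is to place the sequence inside $\dom D^k \cap \dom D^*_k$, where the compactness property is available. Since the complex is closed, \Cref{cor: Hodge decomposition} identifies $(\ker D^k)^\perp$ with the closed range $\im D^*_{k+1}$. Hence each $\alpha_m = D^*_{k+1}\eta_m$ for some $\eta_m \in \dom D^*_{k+1}$, and because the adjoint operators themselves form a complex, $D^*_k D^*_{k+1} = 0$, we obtain $\alpha_m \in \ker D^*_k \subset \dom D^*_k$ with $D^*_k \alpha_m = 0$. Thus each $\alpha_m$ indeed lies in $\dom D^k \cap \dom D^*_k$.

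The sequence $\{\alpha_m\}$ is bounded in the graph norm on $\dom D^k \cap \dom D^*_k$, since $\|\alpha_m\|_{\Ac^k} = 1$, $\|D^k\alpha_m\|_{\Ac^{k+1}} \to 0$, and $\|D^*_k \alpha_m\|_{\Ac^{k-1}} = 0$. By the compactness property the inclusion into $L^2 \Ac^k$ is compact, so a subsequence converges strongly, $\alpha_m \to \alpha$ in $L^2 \Ac^k$. Since $D^k$ is closed by \Cref{prop: CDUL} and $D^k \alpha_m \to 0$, the limit satisfies $\alpha \in \dom D^k$ and $D^k \alpha = 0$, that is $\alpha \in \ker D^k$. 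On the other hand, $(\ker D^k)^\perp$ is a closed subspace, so $\alpha \in (\ker D^k)^\perp$ as well, forcing $\alpha = 0$. This contradicts $\|\alpha\|_{\Ac^k} = \lim_m \|\alpha_m\|_{\Ac^k} = 1$, and the Poincaré inequality follows.

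I expect the main obstacle to be the bookkeeping in the second step, namely confirming that $(\ker D^k)^\perp$ consists of elements lying in $\dom D^*_k$ on which $D^*_k$ vanishes, rather than the compactness argument itself, which is routine once the sequence has been positioned in $\dom D^k \cap \dom D^*_k$. As an alternative that avoids the contradiction, one could note that the restriction of $D^k$ to $\dom D^k \cap (\ker D^k)^\perp$ is a continuous bijection onto the closed, hence complete, range $\im D^k$; the bounded inverse theorem then supplies a bounded inverse, which is precisely the asserted estimate.
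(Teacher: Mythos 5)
Your argument is correct. Note, however, that the paper does not prove this corollary at all: it simply invokes \cite[Thm.~4.6]{arnoldFEEC}, which holds for any closed Hilbert complex once \Cref{thm: CdR has the compactness property} (and hence closedness of the range of $D^k$) is established. What you have written is essentially a self-contained reconstruction of that cited theorem's proof, and both of your routes are sound. The contradiction-plus-compactness version is valid: the identification $(\ker D^k)^\perp = \im D^*_{k+1} \subseteq \ker D^*_k$ via \Cref{cor: Hodge decomposition} and the adjoint complex property correctly places the sequence in $\dom D^k \cap \dom D^*_k$ with uniformly bounded graph norm, the compact inclusion yields a strongly convergent subsequence, and closedness of $D^k$ (\Cref{prop: CDUL}) forces the limit into $\ker D^k \cap (\ker D^k)^\perp = \{0\}$, contradicting the unit norm. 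Your alternative via the bounded inverse theorem applied to $D^k \colon \dom D^k \cap (\ker D^k)^\perp \to \im D^k$ is in fact the argument given in \cite{arnoldFEEC} itself, and it is slightly more economical since it needs only the closed-range property from the preceding corollary rather than the full compactness property; the compactness route, by contrast, reproves closedness of the range implicitly. Either way, the one point worth spelling out more carefully is the step you flagged yourself: that $\im D^*_{k+1} \subseteq \dom D^*_k$ with $D^*_k D^*_{k+1} = 0$ is the statement that the adjoints form a (dual) complex, which follows from $D^k D^{k-1} = 0$ by taking adjoints, and this is exactly the bookkeeping needed to make the compactness hypothesis applicable.
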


A major consequence of \Cref{thm: CdR has the compactness property} is that all systems of equations corresponding to a Hodge-Laplace problem are well-posed, as shown in the following corollary. 
\begin{corollary}[{\cite[Thm. 4.8]{arnoldFEEC}}] \label{cor: Well-posedness}
    The Hodge-Laplace problem on the \v{C}ech-de Rham complex \eqref{eq: total HL} is well-posed.
\end{corollary}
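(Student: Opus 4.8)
The plan is to reduce the statement to the abstract well-posedness theory for closed Hilbert complexes, since all the structural hypotheses have already been secured in the preceding results. First I would observe that \Cref{thm: CdR has the compactness property} establishes the compactness property for the $L^2$ \v{C}ech-de Rham complex, and hence, through the corollaries that follow it, that this complex is a closed Hilbert complex carrying a finite-dimensional harmonic space $\ker \Delta_D^k$, the orthogonal Hodge decomposition of \Cref{cor: Hodge decomposition}, and the Poincaré inequality of \Cref{cor: Poincaré inequality}. These are precisely the ingredients assumed in the abstract framework of \cite{arnoldFEEC}, so the only task is to recognise the Hodge-Laplace problem \eqref{eq: total HL} as an instance of the problem treated there and then to invoke \cite[Thm. 4.8]{arnoldFEEC}.

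Next I would recast the strong formulation \eqref{eq: total HL} in the mixed weak form natural to the Hilbert-complex setting, introducing the auxiliary variable $\sigma = D^{*}_k \alpha \in \dom D^{k-1}$ and seeking the pair $(\sigma, \alpha)$, with $\alpha \perp \ker \Delta_D^k$, that solves the associated saddle-point system in the weighted inner products $\langle \cdot, \cdot \rangle_{\Ac^{\bullet}_w}$. Well-posedness of this system is exactly what the abstract theorem delivers: the Poincaré inequality supplies coercivity on the orthogonal complements of the kernels $(\ker D^{k-1})^{\perp}$ and $(\ker D^{k})^{\perp}$, the closed-range property (equivalent to that inequality) supplies the Babu\v{s}ka--Brezzi inf-sup conditions, and the finite dimensionality of $\ker \Delta_D^k$ lets the harmonic component be handled by a multiplier that vanishes under the standing assumption $\varphi \perp \ker \Delta_D$. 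Existence, uniqueness, and a stability estimate of $\alpha$ in terms of $\varphi$ then follow directly, which is the assertion of the corollary.

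The only point demanding genuine care is that the analysis is carried out with the weighted inner products, so that $D^{*}$ here denotes the weighted adjoint $D^{*,w}$ rather than $D^{*,1}$. I would dispose of this by recalling that each weight is a bounded, boundedly invertible, symmetric operator, so that $\langle \cdot, \cdot \rangle_{\Ac^k_w}$ induces a norm on $L^2 \Ac^k$ equivalent to the unweighted one; consequently the weighted and unweighted complexes agree as topological cochain complexes, and the closed-range and compactness properties are insensitive to the change of inner product. The abstract theory therefore applies verbatim with $D^{*,w}$ in place of $D^{*,1}$, the stability constant merely acquiring factors controlled by the operator norms of the weights and their inverses. I thus expect no real obstacle in this corollary: the substantive analytic work was already done in \Cref{thm: CdR has the compactness property}, and what remains is the bookkeeping of matching \eqref{eq: total HL} to the hypotheses of \cite[Thm. 4.8]{arnoldFEEC}.
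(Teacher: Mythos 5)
Your proposal matches the paper's approach: the paper offers no separate argument for this corollary beyond citing \cite[Thm.~4.8]{arnoldFEEC}, with the hypotheses (closed Hilbert complex with the compactness property) supplied by \Cref{thm: CdR has the compactness property} and the surrounding corollaries, exactly as you describe. Your additional remarks on the mixed formulation and on the equivalence of the weighted and unweighted inner products are correct elaborations of the same route rather than a different one.
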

%\subsection{Extensions}

\subsection{Solution theory of evolutionary equations}
In this section, we generalize the Hodge-Laplace problem \eqref{eq: total HL} by introducing a time-dependent term. In particular, let us consider the following problem: find $\alpha$ such that
\begin{align} \label{eq: timedependent}
    \partial_t^\ell \alpha + \Delta_D \alpha &= \phi, &
    \ell &\in \{1, 2\}.
\end{align}

For $\ell = 1$, we refer to this problem as the parabolic Hodge-heat equation. On the other hand, for $\ell = 2$, we obtain the hyperbolic Hodge-wave equation. Since we aim to analyze both problems in a single framework, we turn to the solution theory for evolutionary equations \cite{picard2015well}. 

We start by introducing several preliminary definitions. First, let $V$ be the Hilbert space given by
\begin{align} \label{eq: def V}
    V &:= L^2 \Ac^k \times L^2\Ac^{k + 1} \times L^2\Ac^{k - 1}.
\end{align}
Let the inner product and norm on $V$ be defined as follows for $v = (\alpha, \beta, \gamma) \in V$ and $\tilde{v} = (\tilde{\alpha}, \tilde{\beta}, \tilde{\gamma}) \in V$:
\begin{align}
    \langle v, \tilde{v} \rangle_V &:= 
    \langle \alpha, \tilde{\alpha} \rangle_{\Ac^k_w}
    + \langle \beta, \tilde{\beta} \rangle_{\Ac^{k + 1}_w}
    + \langle \gamma, \tilde{\gamma} \rangle_{\Ac^{k - 1}_w}, &
    \| v \|_V^2 &:= \langle v, v \rangle_V.
\end{align}

Next, for given parameter $\mu$, we define the weighted Bochner space and its associated norm as follows:
\begin{align*}
    L_\mu^2(\mathbb{R}; V) &:= \left\{ g: \mathbb{R} \to V \mid \| g \|_{L_\mu^2(\mathbb{R}; V)} < \infty \right\}
    , &
    \| g \|_{L_\mu^2(\mathbb{R}; V)}
    &:= \int_{\mathbb{R}} e^{- \mu t} \| g(t) \|_V^2 dt.
\end{align*}
On this space, the temporal derivative is defined as
\begin{align}
    \partial_{0, \mu} &: \dom(\partial_{0, \mu}) \subseteq L_\mu^2(\mathbb{R}; V) \to L_\mu^2(\mathbb{R}; V), &
    \partial_{0, \mu} &:= e^{\mu t} (\partial_t + \mu) e^{- \mu t}.
\end{align}
We emphasize that $\partial_{0, \mu} g = \partial_t g$ for differentiable $g: \mathbb{R} \to \mathbb{R}$, which can be confirmed by direct calculation.

Finally, for an operator $A: \dom(A) \subseteq V \to V$, let $\Gc(A) := \{ (v, Av), v \in \dom(A) \}$ be its graph. $\overline{A}$ then denotes the closure of $A$ which, if it exists, satisfies $\Gc(\overline{A}) = \overline{\Gc(A)}$.

With these prerequisites in place, we state the following key result for linear evolutionary equations.

\begin{theorem}[{\cite[Thm. 2.5]{picard2015well}}] \label{thm: Picard}
    Let $\mu > 0$ and $r > \frac1{2\mu}$. Let $A: \dom(A) \subseteq V \to V$ and let $M(z): V \to V$ be a continuous linear operator for all $z$ in the open disk $B_{\mathbb{C}}(r, r) \subset \mathbb{C}$ with center $r$ and radius $r$. Assume that:
\begin{enumerate}[label = {A\arabic*.}, ref = {A\arabic*}]
        \item The operator $A$ is skew-adjoint. \label{ass: A1}
        \item There exists $c>0$ such that $z^{-1} M(z) - cI$ is monotone for all $z \in B_{\mathbb{C}}(r, r)$. \label{ass: A2}
\end{enumerate}

    Then, for a given $f \in L_\mu^2(\mathbb{R}; V)$, a unique $v \in L_\mu^2(\mathbb{R}; V)$ exists that satisfies the evolutionary equation
    \begin{align} \label{eq: Picard format}
        \left(\overline{\partial_{0, \mu}M(\partial_{0, \mu}^{-1}) + A}\right) v = f.
    \end{align}
    Moreover, $v$ satisfies the bound
    $c \| v \|_{L_{0, \mu}^2(\mathbb{R}; V)} \le \| f \|_{L_{0, \mu}^2(\mathbb{R}; V)}$.
\end{theorem}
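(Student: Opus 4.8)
The plan is to reduce the well-posedness of \eqref{eq: Picard format} to the bounded invertibility of the single closed operator $\overline{\partial_{0, \mu} M(\partial_{0, \mu}^{-1}) + A}$ on $L_\mu^2(\mathbb{R}; V)$, by diagonalizing the time derivative. First I would introduce the Fourier--Laplace transform $\mathcal{L}_\mu : L_\mu^2(\mathbb{R}; V) \to L^2(\mathbb{R}; V)$, a unitary map under which $\partial_{0, \mu}$ becomes multiplication by $(i\xi + \mu)$ in the frequency variable $\xi$. Hence $\partial_{0, \mu}$ is normal, $\partial_{0, \mu}^{-1}$ is bounded and acts as multiplication by $(i\xi + \mu)^{-1}$, and the values $(i\xi + \mu)^{-1}$ for $\xi \in \mathbb{R}$ fill the circle $\partial B_{\mathbb{C}}(\frac{1}{2\mu}, \frac{1}{2\mu})$, which for $r > \frac{1}{2\mu}$ lies in $B_{\mathbb{C}}(r, r)$ (the two circles being internally tangent only in the limit $\xi \to \pm\infty$). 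This is exactly what licenses the operator-valued functional calculus defining $M(\partial_{0, \mu}^{-1})$ as multiplication by $\xi \mapsto M((i\xi + \mu)^{-1})$, bounded since $M$ is bounded on the disk. Writing $B := \partial_{0, \mu} M(\partial_{0, \mu}^{-1}) + A$, the product $\partial_{0, \mu} M(\partial_{0, \mu}^{-1})$ then corresponds to multiplication by $z^{-1} M(z)$ at $z = (i\xi + \mu)^{-1}$.

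The analytic core is an abstract invertibility criterion: a densely defined closed operator $B$ on a Hilbert space for which both $B$ and $B^*$ are strictly monotone, i.e. $\Re\langle Bu, u\rangle \ge c\|u\|^2$ and $\Re\langle B^* u, u\rangle \ge c\|u\|^2$, is continuously invertible with $\|B^{-1}\| \le 1/c$. I would therefore verify these two lower bounds for $\overline{B}$. For the forward bound, Parseval's identity together with Assumption~\ref{ass: A2} gives, for $u$ in the domain and $\hat u = \mathcal{L}_\mu u$, the estimate $\Re\langle \partial_{0, \mu} M(\partial_{0, \mu}^{-1}) u, u\rangle = \int_{\mathbb{R}} \Re\langle z^{-1} M(z)\, \hat u(\xi), \hat u(\xi)\rangle_V\, d\xi \ge c\|u\|^2$, since each integrand is bounded below using the monotonicity of $z^{-1} M(z) - cI$ at the boundary points $z = (i\xi + \mu)^{-1}$. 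Assumption~\ref{ass: A1} contributes nothing to the real part, as skew-adjointness of $A$ forces $\Re\langle Au, u\rangle = 0$; hence $\Re\langle Bu, u\rangle \ge c\|u\|^2$.

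For the adjoint bound I would use $B^* = M(\partial_{0, \mu}^{-1})^* \partial_{0, \mu}^* - A$, noting that $\partial_{0, \mu}^*$ is multiplication by $\overline{i\xi + \mu}$ and $A^* = -A$. The key observation is that the frequency-wise symbol of $(\partial_{0, \mu} M(\partial_{0, \mu}^{-1}))^*$ is $\overline{z}^{-1} M(z)^*$, and for every $\phi \in V$ one has $\Re\langle \overline{z}^{-1} M(z)^* \phi, \phi\rangle_V = \Re\langle z^{-1} M(z)\phi, \phi\rangle_V$, because taking a complex conjugate leaves the real part unchanged. Thus the same constant $c$ controls $B^*$ and the skew term again drops out. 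With both monotonicity estimates in hand, the abstract criterion yields a bounded inverse of $\overline{B}$ of norm at most $1/c$, so $v := \overline{B}^{-1} f$ is the unique solution of \eqref{eq: Picard format} and automatically obeys $c\|v\|_{L_\mu^2(\mathbb{R}; V)} \le \|f\|_{L_\mu^2(\mathbb{R}; V)}$.

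I expect the main obstacle to lie in the functional-calculus bookkeeping rather than in the monotonicity estimates, which are essentially algebraic once the transform is set up. One must justify that the operator-valued multiplier $M(\partial_{0, \mu}^{-1})$ is well defined and bounded (requiring, beyond continuity in $z$, uniform boundedness and measurability of $M$ on the disk), that the formal products and adjoints such as $(\partial_{0, \mu} M(\partial_{0, \mu}^{-1}))^* = M(\partial_{0, \mu}^{-1})^* \partial_{0, \mu}^*$ hold on the correct domains, and that passing to the closure $\overline{B}$ preserves strict monotonicity of both the operator and its adjoint, with the adjoint of the closure identified as the closure of the formal adjoint. These domain and closure issues are where the careful work concentrates.
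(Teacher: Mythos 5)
This theorem is imported verbatim from \cite{picard2015well} and the paper offers no proof of its own, so there is nothing in the source to compare against line by line. Your proposal is, however, a faithful reconstruction of the standard argument in that reference: unitary Fourier--Laplace transform turning $\partial_{0,\mu}$ into multiplication by $i\xi+\mu$, the observation that the values $(i\xi+\mu)^{-1}$ trace the circle $\partial B_{\mathbb{C}}(\tfrac1{2\mu},\tfrac1{2\mu})\setminus\{0\}\subset B_{\mathbb{C}}(r,r)$, strict monotonicity of both $\overline{\partial_{0,\mu}M(\partial_{0,\mu}^{-1})+A}$ and its adjoint via Assumptions~\ref{ass: A1}--\ref{ass: A2} and Parseval, and the closed-range/dense-range lemma giving $\|B^{-1}\|\le 1/c$. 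The two caveats you flag are exactly where the cited proof does its technical work: the multiplier $M(\partial_{0,\mu}^{-1})$ is only well defined and bounded because Picard assumes $M$ to be holomorphic and uniformly bounded on the disk (the statement's ``continuous linear operator for all $z$'' is weaker than what is actually needed, since $z\to 0$ as $\xi\to\pm\infty$), and the identification of $(\overline{B})^*$ with the closure of the formal adjoint on its full domain requires an argument beyond the formal computation. With those points understood as belonging to the cited reference rather than to gaps in your reasoning, the proposal is correct.
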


We are now ready to fit the Hodge-heat and Hodge-wave equations \eqref{eq: timedependent} in the framework of evolutionary equations and show that these problems are well-posed.

\begin{corollary} \label{cor: Hodge-heat}
    For $\mu > 0$ and $\phi \in L_{0, \mu}^2(\mathbb{R}; L^2 \Ac^k)$, the Hodge-heat problem \eqref{eq: timedependent} with $\ell = 1$ admits a unique solution $\alpha \in L_{0, \mu}^2(\mathbb{R}; L^2 \Ac^k)$ that satisfies
    \begin{align} \label{eq: result heat}
        c
        \| (\alpha, D^k \alpha, D_k^* \alpha) \|_{L_{0, \mu}^2(\mathbb{R}; V)}
        \le \| \phi \|_{L_{0, \mu}^2(\mathbb{R}; L^2 \Ac^k)},
    \end{align}
    for some $c > 0$.
\end{corollary}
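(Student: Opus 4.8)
The plan is to cast the Hodge-heat equation into the evolutionary form \eqref{eq: Picard format} and invoke \Cref{thm: Picard}. First I would rewrite the spatially second-order equation $\partial_t \alpha + \Delta_{D} \alpha = \phi$ as a first-order system by introducing the auxiliary unknowns $\beta := D^k \alpha \in L^2\Ac^{k+1}$ and $\gamma := D_k^* \alpha \in L^2\Ac^{k-1}$. Since $\Delta_{D}\alpha = D_{k+1}^{*}D^k\alpha + D^{k-1}D_k^{*}\alpha = D_{k+1}^{*}\beta + D^{k-1}\gamma$, the single equation becomes, for $v = (\alpha,\beta,\gamma)\in V$ and $f=(\phi,0,0)$,
\[
\partial_{0,\mu} M_0 v + M_1 v + A v = f, \qquad
M_0 = \diag(I,0,0), \quad M_1 = \diag(0,I,I), \quad
A = \begin{pmatrix} 0 & D^{*}_{k+1} & D^{k-1} \\ -D^k & 0 & 0 \\ -D^{*}_k & 0 & 0 \end{pmatrix}.
\]
Taking the material law $M(z) := M_0 + z M_1$ gives $\partial_{0,\mu} M(\partial_{0,\mu}^{-1}) = \partial_{0,\mu} M_0 + M_1$, so the system is precisely of the form \eqref{eq: Picard format}; here $M(z)$ is an everywhere-defined bounded operator, polynomial (hence analytic) in $z$ on the disk $B_{\mathbb{C}}(r,r)$, which excludes $z=0$.

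The second step is to verify the two hypotheses of \Cref{thm: Picard}. For \ref{ass: A1}, I would write $A = \begin{pmatrix} 0 & B^{*} \\ -B & 0 \end{pmatrix}$, where $B\alpha := (D^k\alpha, D_k^{*}\alpha)$ has domain $\dom D^k \cap \dom D_k^{*}$, and its weighted adjoint is $B^{*}\colon (\beta,\gamma)\mapsto D_{k+1}^{*}\beta + D^{k-1}\gamma$, by the adjoint relations \eqref{eq: total codiff}. Since $D$ and $D^{*}$ are closed and densely defined (\Cref{prop: CDUL}, \Cref{remark: codifferential CDUL}), $B$ is closed, and its domain $\dom D^k \cap \dom D_k^{*}$ is dense. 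The standard fact that $\begin{pmatrix} 0 & B^{*} \\ -B & 0 \end{pmatrix}$ is skew-adjoint with respect to the weighted inner product of $V$ for any closed densely defined $B$ then yields \ref{ass: A1}. I expect this verification to be the main obstacle: the block operator is manifestly skew-symmetric, and the work lies in upgrading this to genuine skew-adjointness by matching $\dom A^{*}$ with $\dom A$, which rests precisely on the closedness supplied by \Cref{prop: CDUL}.

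For \ref{ass: A2}, I would compute $z^{-1}M(z) - cI = \diag(z^{-1}-c,\,1-c,\,1-c)$, so that for $v=(\alpha,\beta,\gamma)$,
\[
\Re\langle (z^{-1}M(z)-cI)v, v\rangle_V = \Re(z^{-1})\,\|\alpha\|^2_{\Ac^k_w} + (1-c)\big(\|\beta\|^2_{\Ac^{k+1}_w} + \|\gamma\|^2_{\Ac^{k-1}_w}\big).
\]
The geometric fact that inversion maps the disk $B_{\mathbb{C}}(r,r)$ into the half-plane $\{\Re(w) > \tfrac{1}{2r}\}$ gives $\Re(z^{-1}) > \tfrac{1}{2r}$ for all $z \in B_{\mathbb{C}}(r,r)$; hence choosing $c := \min\{\tfrac{1}{2r}, 1\} > 0$ makes the right-hand side nonnegative, establishing the required monotonicity. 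This step is routine once the diagonal structure of $M$ is in place.

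Finally I would apply \Cref{thm: Picard} to obtain a unique $v = (\alpha,\beta,\gamma) \in L_\mu^2(\R;V)$ solving the recast system. Reading off the three rows recovers $\beta = D^k\alpha$, $\gamma = D_k^{*}\alpha$, and $\partial_t \alpha + \Delta_{D}\alpha = \phi$, so $\alpha$ solves the Hodge-heat problem \eqref{eq: timedependent} with $\ell=1$ and is unique. Since $f=(\phi,0,0)$ has $\|f\|_V = \|\phi\|_{\Ac^k_w}$ while $v = (\alpha, D^k\alpha, D_k^{*}\alpha)$, the stability bound $c\|v\| \le \|f\|$ delivered by \Cref{thm: Picard} is exactly \eqref{eq: result heat}.
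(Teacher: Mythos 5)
Your proposal is correct and follows essentially the same route as the paper's proof: the identical three-field reformulation with $\beta = D^k\alpha$, $\gamma = D_k^*\alpha$, the same material law $M(z) = \diag(1,0,0) + z\,\diag(0,1,1)$ and skew block operator $A$, the same verification of \ref{ass: A2} via $\Re(z^{-1}) > \tfrac{1}{2r}$ with $c = \min\{1, \tfrac{1}{2r}\}$, and the same concluding appeal to \Cref{thm: Picard}. The only difference is that you spell out the skew-adjointness of $A$ in more detail than the paper, which simply attributes \ref{ass: A1} to the definition of the total codifferential \eqref{eq: total codiff}.
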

\begin{proof}
    We cast the problem in the format of evolutionary equations \eqref{eq: Picard format} and then invoke the well-posedness result from \Cref{thm: Picard}.     
    Let $\beta = D^k \alpha$ and $\gamma = D_k^* \alpha$, so that the problem is rewritten as: find $(\alpha, \beta, \gamma) \in V$ such that
    \begin{align} \label{eq: 3field heat}
        \left(
        \partial_{0, \mu}
        \begin{bmatrix}
            1 \\ & 0 \\ && 0
        \end{bmatrix}
        +
        \begin{bmatrix}
            0 \\ & 1 \\ && 1
        \end{bmatrix}
        +
        \begin{bmatrix}
            & D_{k + 1}^* & D^{k - 1}\\
            - D^k \\
            - D_k^*
        \end{bmatrix}
        \right)
        \begin{bmatrix}
            \alpha \\ \beta \\ \gamma
        \end{bmatrix}
        &= 
        \begin{bmatrix}
            \phi \\ 0 \\ 0
        \end{bmatrix}.
    \end{align}

    Problem \eqref{eq: 3field heat} has the form \eqref{eq: Picard format} with $v := (\alpha, \beta, \gamma)$, $f := (\phi, 0, 0)$, and
    \begin{align} \label{eq: def M A}
        M(z) &:= \begin{bmatrix}
            1 \\ & 0 \\ && 0
        \end{bmatrix}
        +
        z
        \begin{bmatrix}
            0 \\ & 1 \\ && 1
        \end{bmatrix},
        &
        A &:= 
        \begin{bmatrix}
            & D_{k + 1}^* & D^{k - 1}\\
            - D^k \\
            - D_k^*
        \end{bmatrix}.
    \end{align}
    
    We now confirm the two assumptions of \Cref{thm: Picard}. 
    First, \ref{ass: A1} is valid by the definition of the total codifferential \eqref{eq: total codiff}. For the second, we note that $z \in B_{\mathbb{C}}(r, r)$ implies that $\Re(z^{-1}) > \frac1{2 r}$. In turn, we derive
    \begin{align*}
        \Re\left(\langle v, z^{-1} M(z) v \rangle_V\right)
        = \Re\left(\left\langle v,
        \begin{bmatrix}
            z^{-1} \\ & 1 \\ && 1
        \end{bmatrix} v \right\rangle_V \right)
        \ge \min\left\{ 1, \frac1{2 r}\right\} 
        \langle v, v \rangle_V.
    \end{align*}
    By linearity of $M(z)$, assumption \ref{ass: A2} now follows with $c = \min\left\{ 1, \frac1{2 r}\right\} > 0$. \Cref{thm: Picard} then implies that a unique $v = (\alpha, \beta, \gamma)$ exists, bounded in the $L^2_{0, \mu}(\mathbb{R}; V)$-norm. In turn, the result \eqref{eq: result heat} follows by equivalence of the problems \eqref{eq: timedependent} and \eqref{eq: 3field heat}.
\end{proof}

\begin{corollary} \label{cor: Hodge-wave}
    For $\mu > 0$ and $\phi \in L_{0, \mu}^2(\mathbb{R}; L^2 \Ac^k)$ with $\partial_{0, \mu}^{-1} \phi \in L_{0, \mu}^2(\mathbb{R}; L^2 \Ac^k)$, the Hodge-wave problem \eqref{eq: timedependent} with $\ell = 2$ admits a unique solution $\alpha \in L_{0, \mu}^2(\mathbb{R}; L^2 \Ac^k)$ that satisfies
    \begin{align}
        c \| (\alpha, \partial_{0, \mu}^{-1} D^k \alpha, \partial_{0, \mu}^{-1} D_k^* \alpha) \|_{L_{0, \mu}^2(\mathbb{R}; V)}
        \le \| \partial_{0, \mu}^{-1} \phi \|_{L_{0, \mu}^2(\mathbb{R}; L^2 \Ac^k)},
    \end{align}
    for some $c > 0$.
\end{corollary}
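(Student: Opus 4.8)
The plan is to mirror the proof of \Cref{cor: Hodge-heat}, the only genuinely new difficulty being the second-order temporal derivative. I would first reduce the order in time by applying the inverse $\partial_{0, \mu}^{-1}$, which is bounded on $L_{0, \mu}^2(\mathbb{R}; V)$ since $\mu > 0$, to the equation $\partial_{0, \mu}^2 \alpha + \Delta_D \alpha = \phi$. This turns the temporal principal part into $\partial_{0, \mu} \alpha$ and the right-hand side into $\partial_{0, \mu}^{-1} \phi$, which is exactly why the corollary assumes $\partial_{0, \mu}^{-1} \phi \in L_{0, \mu}^2(\mathbb{R}; L^2 \Ac^k)$ and why the claimed bound is stated in terms of $\partial_{0, \mu}^{-1}$-smoothed quantities. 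I would then introduce the auxiliary variables $\beta := \partial_{0, \mu}^{-1} D^k \alpha$ and $\gamma := \partial_{0, \mu}^{-1} D_k^* \alpha$. Because $\partial_{0, \mu}^{-1}$ commutes with the purely spatial operators $D$ and $D^*$, expanding $\Delta_D = D^{k-1} D_k^* + D_{k+1}^* D^k$ gives $\partial_{0, \mu}^{-1} \Delta_D \alpha = D^{k-1} \gamma + D_{k+1}^* \beta$, while the defining relations for $\beta, \gamma$ read $\partial_{0, \mu} \beta = D^k \alpha$ and $\partial_{0, \mu} \gamma = D_k^* \alpha$.

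Collecting these three relations, I would cast the problem as: find $v = (\alpha, \beta, \gamma) \in V$ such that
\begin{equation*}
\left( \partial_{0, \mu} \begin{bmatrix} 1 \\ & 1 \\ && 1 \end{bmatrix} + \begin{bmatrix} & D_{k+1}^* & D^{k-1} \\ -D^k \\ -D_k^* \end{bmatrix} \right) \begin{bmatrix} \alpha \\ \beta \\ \gamma \end{bmatrix} = \begin{bmatrix} \partial_{0, \mu}^{-1} \phi \\ 0 \\ 0 \end{bmatrix}.
\end{equation*}
This has the evolutionary form \eqref{eq: Picard format} with $f := (\partial_{0, \mu}^{-1} \phi, 0, 0)$, the trivial material law $M(z) := I$ (so that $\partial_{0, \mu} M(\partial_{0, \mu}^{-1}) = \partial_{0, \mu}$), and precisely the same operator $A$ as in \eqref{eq: def M A}. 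The point of reducing the order is that the entire dynamics is now carried by $\partial_{0, \mu}$, so $M$ no longer has to encode the order of the equation; indeed a naive choice such as a $z^{-1}$ entry in $M(z)$ would fail, as it is unbounded on $B_{\mathbb{C}}(r, r)$.

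It remains to verify the two hypotheses of \Cref{thm: Picard}. Assumption \ref{ass: A1} is inherited verbatim from \Cref{cor: Hodge-heat}, since $A$ is unchanged and its skew-adjointness follows from the adjoint relation \eqref{eq: total codiff}. Assumption \ref{ass: A2} becomes immediate with $M(z) = I$: for $z \in B_{\mathbb{C}}(r, r)$ one has $\Re(z^{-1}) > \frac1{2r}$, so $\Re\left(\langle v, z^{-1} M(z) v \rangle_V\right) = \Re(z^{-1}) \|v\|_V^2 \ge \frac1{2r} \|v\|_V^2$, giving monotonicity of $z^{-1} M(z) - cI$ with $c = \frac1{2r} > 0$. \Cref{thm: Picard} then produces a unique $v = (\alpha, \beta, \gamma)$ satisfying $c \|v\|_{L_{0, \mu}^2(\mathbb{R}; V)} \le \|f\|_{L_{0, \mu}^2(\mathbb{R}; V)}$, which is exactly the asserted estimate once $\beta, \gamma$ and $f$ are written out. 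The step I expect to require the most care is the equivalence between the original second-order problem and this reduced first-order system: I must confirm that $\partial_{0, \mu}^{-1}$ is boundedly invertible on $L_{0, \mu}^2(\mathbb{R}; V)$, that it genuinely commutes with $D$ and $D^*$ on the relevant domains, and that any solution of the reduced system recovers the full equation $\partial_{0, \mu}^2 \alpha + \Delta_D \alpha = \phi$ rather than a weaker $\partial_{0, \mu}^{-1}$-smoothed version, so that multiplication by $\partial_{0, \mu}^{-1}$ introduces no spurious kernel.
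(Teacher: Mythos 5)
Your proposal is correct and follows essentially the same route as the paper: the same substitution $\beta = \partial_{0,\mu}^{-1} D^k \alpha$, $\gamma = \partial_{0,\mu}^{-1} D_k^* \alpha$, the same reduced first-order system with $M(z) = I$ and the operator $A$ from \eqref{eq: def M A}, and the same verification of \ref{ass: A1} and \ref{ass: A2} with $c = \frac1{2r}$. The additional care you flag about the equivalence with the original second-order problem is exactly the step the paper also invokes (though more tersely) to transfer the bound back to $\alpha$.
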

\begin{proof}
    The proof proceeds similar to \Cref{cor: Hodge-heat}. Let $V$ be defined as in \eqref{eq: def V} and let now $\beta = \partial_{0, \mu}^{-1} D^k \alpha$ and $\gamma = \partial_{0, \mu}^{-1} D_k^* \alpha$. The problem then becomes: find $(\alpha, \beta, \gamma) \in V$ such that
    \begin{align} \label{eq: 3field wave}
        \left(
        \partial_{0, \mu} I
        +
        A
        \right)
        \begin{bmatrix}
            \alpha \\ \beta \\ \gamma
        \end{bmatrix}
        &= 
        \partial_{0, \mu}^{-1}
        \begin{bmatrix}
            \phi \\ 0 \\ 0
        \end{bmatrix},
    \end{align}
    with $A$ as in \eqref{eq: def M A}. Again, the first assumption \ref{ass: A1} follows directly from \eqref{eq: total codiff}. Here, \ref{ass: A2} is also immediate with $c = \frac1{2r}$ since $M(z) := I$. To obtain the result on $\alpha$, we use the equivalence between the problems \eqref{eq: timedependent} and \eqref{eq: 3field wave}.
\end{proof}

A more general class of nonlinear problems involving maximal monotone operators can be analyzed by following \cite[Thm. 3.2]{picard2015well}. We have restricted our exposition to the linear case for conciseness.

\subsection{Further implications for approximation methods methods}
Finally, we mention that the framework of finite element exterior calculus \cite{afwFEEC3} directly provides stable and convergent mixed finite element discretizations for these problems, given a conforming, simplicial grid. Additionally, functional guaranteed a posteriori bounds can be obtained using the techniques summarized in \cite{pauly2020solution}.

\section{Examples in mathematical modeling} 
\label{sec: examples}\noindent 
We consider three elementary examples in which $\Uc = \{ U_0, U_1 \}$ forms an open cover of a given domain $\Omega$, as illustrated in \Cref{fig: figure1}. In each case, we show that the Hodge-Laplace problem corresponds to the coupled equations that arise in physical applications.
\begin{figure}[htb]
    \centering
    \includegraphics[width=0.9\linewidth]{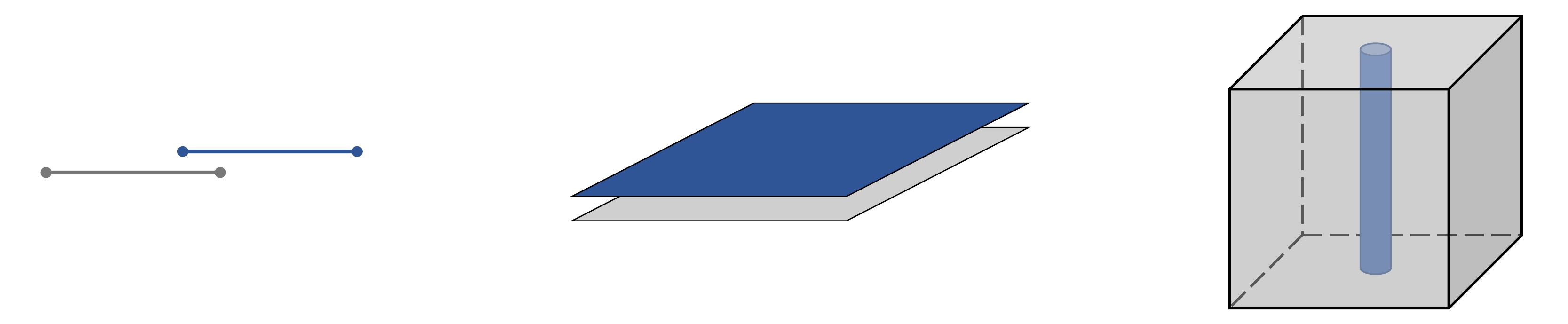}
    \caption{The open covers considered in the three examples. From left to right, the total domain $\Omega$ consists of a line segment, a square, and a cube, respectively. In each case, $U_0$ is illustrated in gray and $U_1$ in blue.}
    \label{fig: figure1}
\end{figure}

We focus on $\Delta_{D,w}^0$ i.e. the weighted Hodge-Laplace operator defined on $L^2 \Ac^0 = L^2 \Ac^{0,0}$ in the bottom-left corner of the double complex \eqref{eq: double complex}. To make the exposition accessible, we will use the canonical proxies of differential forms, through the identification of ``scalar-valued'' spaces $L^2\Lambda^0(U_i) 
%\cong L^2\Lambda^n(U_i) 
\cong L^2(U_i)$, as well as ``vector-valued'' spaces $L^2\Lambda^1(U_i) \cong %L^2\Lambda^{n-1}(U_i) \cong 
(L^2(U_i))^n$.

In particular, we will denote proxies by Latin letters: each 0-form $\alpha_i \in L^2 \Lambda^0(U_i)$ is identified by a scalar proxy $a_i \in L^2(U_i)$ and each 1-form $\beta_i \in L^2\Lambda^1(U_i)$ can be identified by a vector proxy $b_i \in (L^2(U_i))^n$. %Similar identifications are made for $n$-forms and $(n-1)$-forms

%Let us consider in detail the case of $\Delta_D^0$. 
On $L^2 \Ac^0$, the differential and difference operators then correspond to $d\alpha_i = \nabla a_i$ and $\delta \alpha = (a_1 - a_0)|_{U_{0, 1}}$, respectively. These definitions lead to the following diagram:
\begin{equation} \label{eq: diagram example}
    \begin{tikzcd}
        (b_0, b_1) \arrow[r, mapsto, "- \delta"] & g_{0, 1} \\
        (a_0, a_1) \arrow[r, mapsto, "\delta"]\arrow[u, mapsto, "\nabla"] & b_{0, 1} \arrow[u, mapsto, "\nabla"].
    \end{tikzcd}
\end{equation}
% The function $\gamma_{0, 1} \in L^2 \Ac^{1,1} = L^2(U_{0, 1})$ completes the diagram, but does not play a role in the original problem \eqref{eqs: example mixed}.

When considering weighted inner products as defined in Section 2.1, we will consistently use the convection that we scale the problem to accept unit weights for the inner product an $\Ac^0$, while we apply weighted inner products on $\Ac^1$, denoted $w_0$, $w_1$ and $w_{0,1}$. The adjoint operator for $D^0$ is then calculated as 
\begin{align*} \label{eq: simple codiff}
    \langle D_{1}^{*}\beta, \alpha \rangle_{\Ac^0} &= 
    \langle \beta, D^0 \alpha \rangle_{\Ac^1_w}\\
    &= \langle w_0 b_0, \nabla a_0 \rangle_{U_0} + 
    \langle w_1 b_1, \nabla a_1 \rangle_{U_1}
    + \langle w_{0,1} b_{0,1}, (a_1-a_0) \rangle_{U_{0,1}} \\
    &= \langle -\nabla_0 \cdot (w_0 b_0) - \mathbbm{1}_{0, 1}^0 (w_{0,1} b_{0,1}), a_0 \rangle_{U_0} \\ 
    &+ \langle -\nabla_0 \cdot (w_1 b_1) + \mathbbm{1}_{0, 1}^1 (w_{0,1} b_{0,1}),  a_1 \rangle_{U_1}\\
    &+ \sum_i \int_{\partial U_i} a_0 w_{0,1} b_{0,1}\cdot dA
\end{align*}
We limiting the domain of the codifferential so that the surface integrals vanish.  
Thus the codifferential $D_{1}^{*}$, expressed on the proxies, is composed of the divergence operator restricted to functions with homogeneous boundary conditions $\nabla_0\cdot$, and the indicator function of the overlap $U_{0, 1} \subset U_i$ denoted $\mathbbm{1}_{0, 1}^i$:
\begin{align}
    D^*_{1}\beta &= - \nabla_0 \cdot (w_i b_i) + (-1)^{i+1} \mathbbm{1}_{0, 1}^i (w_{0,1} b_{0, 1}), &
    \text{on }&U_i.
\end{align}

Composing the differential and codifferential operators, he Hodge-Laplace problem becomes as follows for given $\phi = \{ f_0, f_1\} \in L^2 \Ac^0$: Find $\alpha \in \dom \Delta_{D}^0 \subset L^2 \Ac^0$ such that $\Delta_{D}^0 \alpha = \phi$. Written using proxies, the Hodge-Laplace problem becomes
\begin{subequations} \label{eq: primal HL}
\begin{align}
-\nabla \cdot (w_0\nabla a_0) - \mathbbm{1}_{0, 1}^0 w_{0,1}(a_1 - a_0) &= f_0, &
    \text{on } &U_0, \\
-\nabla \cdot (w_1\nabla a_1) + \mathbbm{1}_{0, 1}^1 w_{0,1}(a_1 - a_0) &= f_1, &
    \text{on } &U_1. 
\end{align}
The domain of the adjoint differential imposes the following boundary conditions:
\begin{align} \label{eq: bc primal}
%    a_i &= 0, & \text{on } &\partial U_i \cap \partial \Omega, &
    \nu \cdot (w_i \nabla a_i) &= 0, & \text{on } &\partial U_i, %\setminus \partial \Omega,
\end{align}
in which $\nu$ is the outward unit normal of $\partial U_i$. Finally, the kernel of the Hodge-Laplacian, $\ker \Delta_{D}$ is given by the constants, thus orthogonality to the kernel imposes the constraint
\begin{align} \label{eq: constraint primal}
\langle a_0,1 \rangle_{U_0} + 
    \langle a_1,1 \rangle_{U_1}=0.
\end{align}
\end{subequations}

\cut{
We continue by deriving a mixed formulation of \eqref{eq: primal HL}. For that, let $\beta := D\alpha \in L^2 \Ac^1$ and we emphasize that $\beta \in \dom D^*$ since $D^* D \alpha = f$. Hence, the mixed formulation becomes: find $(\alpha, \beta) \in \dom D^0 \oplus \dom D_1^*$ such that
\begin{align}
    \beta - D \alpha &= 0, &
    D^* \beta &= \phi.
\end{align}
Using the definition \eqref{eq: total codiff}, we can derive the total codifferential explicitly in terms of proxies for the case with open cover $\Uc = \{U_0, U_1\}$:
\begin{align*}
    \langle D^* \beta, \alpha \rangle_{\Ac^0}
    = \langle \beta, D \alpha \rangle_{\Ac^1}
    &= \langle b_{0, 1}, a_1 - a_0 \rangle_{U_{0, 1}}  + \sum_{i \in \{0,1\}} \langle b_i, \nabla a_i \rangle_{U_i} \\
    &=
    \sum_{i \in \{0,1\}} \langle - \nabla \cdot b_i + (-1)^{i + 1} \mathbbm{1}_{0, 1}^i b_{0, 1}, a_i \rangle_{U_i}
    +  \langle \nu \cdot b_i, a_i \rangle_{\partial U_i} \\
    &=
    \sum_{i \in \{0,1\}} \langle - \nabla \cdot b_i + (-1)^{i + 1} \mathbbm{1}_{0, 1}^i b_{0, 1}, a_i \rangle_{U_i}.
    % \label{eq: codiff}
\end{align*}
Here, the indicator function $\mathbbm{1}_{0, 1}^i$ effectively extends $b_{0, 1}$ by zero onto each $U_i$. Moreover, the boundary term $\langle \nu \cdot b_i, a_i \rangle_{\partial U_i}$ is zero because of the boundary conditions \eqref{eq: bc primal}. In fact, this term must be zero because $D^*$ is a CDUL by \Cref{remark: codifferential CDUL} but there exists no distribution $\tilde a \in L^2 (U_i)$ that satisfies $\langle \nu \cdot b_i, a_i \rangle_{\partial U_i} = \langle \tilde a, a_i \rangle_{U_i}$ for all $a_i \in L^2(U_i)$, $i \in \Ic$. 
}

We can similarly state the mixed formulation of the Hodge-Laplace problem \eqref{eq: primal HL} in terms of the canonical proxies: Find $(\alpha, \beta) \in \dom D^0 \oplus \dom D_{1}^* \subset L^2 \Ac^0 \oplus L^2 \Ac^1$ such that
\begin{subequations} \label{eqs: example mixed}
\begin{align}
    b_0 = \nabla a_0,& \qquad
    b_1 = \nabla a_1, \qquad
    b_{0, 1}= a_1 - a_0, \label{eq: constitutive law} \\
    - \nabla \cdot (w_0 b_0) - \mathbbm{1}_{0, 1}^0 (w_{0,1} b_{0, 1}) &= f_0,\qquad  
    - \nabla \cdot (w_1 b_1) + \mathbbm{1}_{0, 1}^1 (w_{0,1} b_{0, 1}) = f_1, \label{eq: conservation law}
\end{align}    
subject to the boundary conditions   
\begin{align}
    %a_i &= 0, & \text{on } &\partial U_i \cap \partial \Omega, &
    \nu \cdot (w_i b_i) &= 0, & \text{on } & \partial U_i %\setminus \partial \Omega
    . \label{eq: bc mixed}
\end{align}
\end{subequations}
The same constraint, equation \eqref{eq: constraint primal}, as in the primal formulation still holds. 

\begin{remark}\label{rem: non0}
Problem \eqref{eqs: example mixed} is one of $(n + 2)$ Hodge-Laplace equations arising from the cover $\Uc = \{U_0, U_1\}$, since the order $k$ ranges from zero to $(n + 1)$.
Using \Cref{prop: decomposition HL}, the (unweighted) Hodge-Laplace operator for $\gamma \in L^2 \Ac^k$ with $1 \le k \le n$ can be written in terms of proxies as
\begin{align*}
    \Delta_D^k \gamma &= - \Delta g_i + \mathbbm{1}_{0, 1}^i (g_i - g_j) & \text{on } &U_i, \ i + j = 1, \\
    \Delta_D^k \gamma &= - \Delta g_{0, 1} + 2 g_{0, 1} & \text{on } &U_{0, 1}.
    % & & & & 
\end{align*}
Here, $-\Delta g_i$ denotes the vector-Laplacian $\nabla \times \nabla \times g_i -\nabla (\nabla \cdot g_i)$ if the proxy $g_i$ is vector-valued ($n=3$ and $k\in (1,2)$). For $\gamma \in L^2 \Ac^{n + 1}$ at the end of the complex, we derive:
\begin{align*}
    \Delta_D^{n + 1} \gamma &= - \Delta g_{0, 1} + 2 g_{0, 1} & \text{ on } &U_{0, 1}.
\end{align*}
All $(n + 2)$ distinct Hodge-Laplace problems are well-posed (subject to appropriate boundary conditions and orthogonality constraints as given above) due to \Cref{cor: Well-posedness}.
\end{remark}

\begin{remark}\label{rem: multiple}
If the open cover contains more sets, i.e. $\Uc = \{U_0, ..., U_N\}$, then the weighted Hodge-Laplace operator on $L^2 \Ac^0$ is given by:
\begin{align*}
    \Delta_{D}^0 \alpha &
    = - \nabla\cdot(w_i \nabla a_i) + \sum_{j \neq i} \mathbbm{1}_{i, j}^i w_{j,i}(a_i - a_j) 
%    = - \Delta a_i + N a_i - \sum_{j \in \Ic, j \ne i} a_j 
    & \text{on } &U_i, \ i \in \Ic,
\end{align*}
using the convention $\mathbbm{1}_{i, j}^i = - \mathbbm{1}_{j, i}^i$.
\end{remark}

\subsection{Two joined, elastic rods in 1D}
\label{sub: 1D example}\noindent
We start with the geometry illustrated in the left of \Cref{fig: figure1}, in which the dimension $n=1$, and the sets $U_0 := (-1, \epsilon)$ and $U_1 := (-\epsilon, 1)$ for some $0 < \epsilon < 1$ form an open cover of $\Omega = (-1, 1)$. This case can be physically realized as illustrated in Figure \ref{fig: rods}, where the model for displacement and strain correspond to the $k=0$ Hodge-Laplace problem. 

\begin{figure}[htb]
    \centering
    \includegraphics[width=0.4\linewidth]{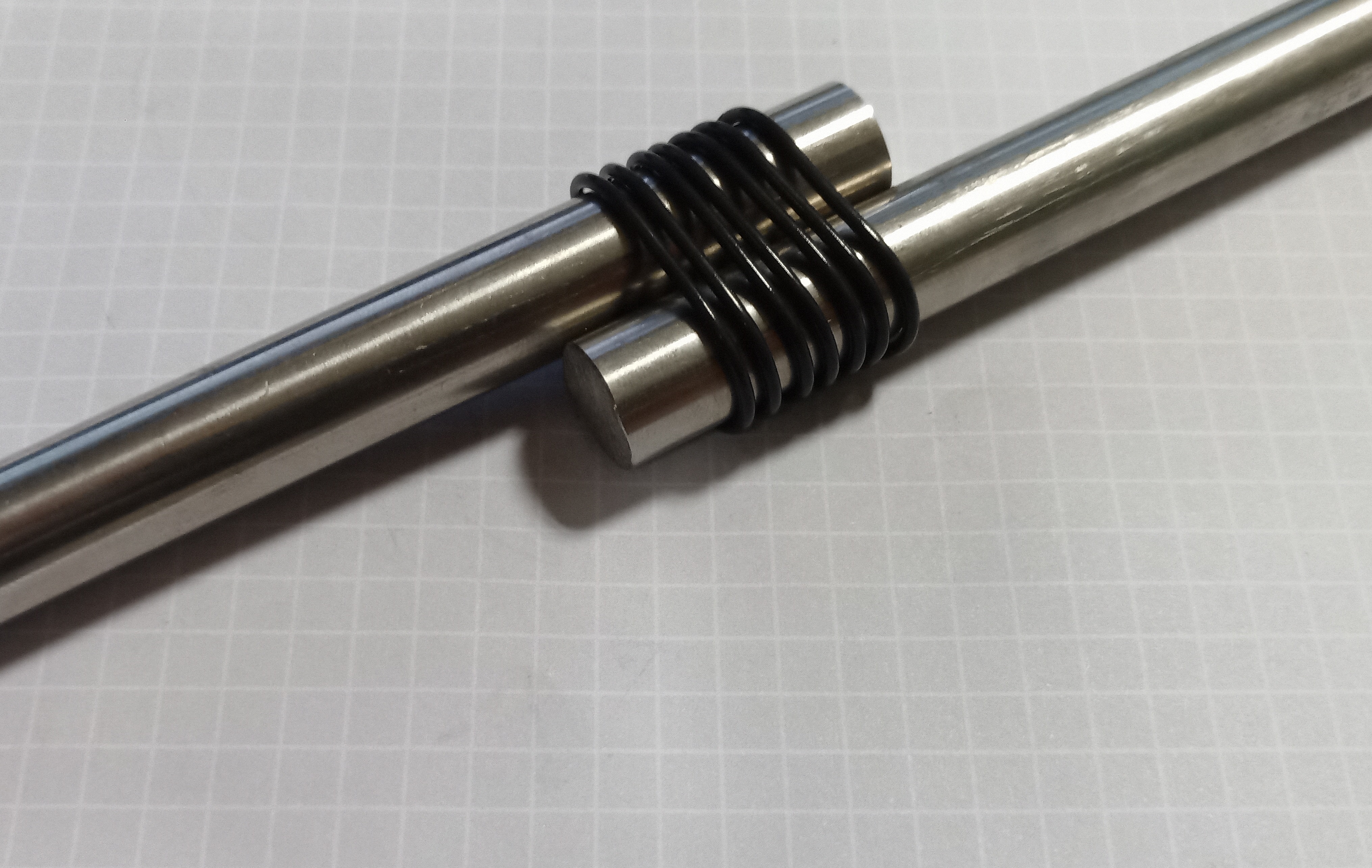}
    %\hspace{0.02\linewidth}
    %\includegraphics[width=0.425\linewidth]{}
    \caption{Stainless steel rods joined using a small overlap. The joining force is dependent the relative displacement of the two rods (due to tensioning of the lashing)}
    \label{fig: rods}
\end{figure}

%We first consider the interpretation arising for $k=0$ (left panel, Figure \ref{fig: rods}). 
For this case, the system \eqref{eqs: example mixed} describes two linearly elastic rods that are elastically connected. As is clear from equation \eqref{eq: constitutive law}, the variables $a_i$ and $b_i$ model the displacement and strain, respectively, and $U_i$ is the initial domain of rod $i$, with $i \in \{ 0, 1 \}$. A third strain variable $b_{0, 1}$ on the overlap $U_{0, 1}$ captures the elongation of the connecting welding (for illustrative purposes represented by a lashing using elastic strings in tension).

Equation \eqref{eq: conservation law} includes Hooke's law in the sense that the linearly weighted strain variables correspond to elastic stress. Similarly, the strain $b_{0, 1}$ may in general be considered monotonically related to elastic stress associated with the stretching the connecting strings. In a linearized regime, we also here obtain proportionality between displacement difference (discrete strain) and stress. Finally, \eqref{eq: conservation law} also includes the momentum balance due to the presence of the divergence operator. 
The boundary conditions \eqref{eq: bc mixed} imply tension-free conditions on both ends of both rods, while the constraint \eqref{eq: constraint primal} defines the mean position of two rods to be at the origin on the real line. 
Forces acting on the two rods are incorporated in the right-hand side terms $f_i$.

%We now consider the physical model arising for $k=1$  (right panel, Figure \ref{fig: rods}). Now the displacement and stress variables in some sense reverse place (due to Hodge duality). That is to say, we consider $b_i$ and $b_{0,1}$ as displacement variables of the rod and the sheet, respectively, while $a_i$ are stress variables in the rod. Additionally, we obtain a third field (corresponding to $\Lambda^1(U_{1,2})$) of stress variables in the sheet, which we refer to as $\gamma$, represented by the scalar variable $g_{0,1}$. Proceeding as in remark \ref{rem: non0} and setting $k=1$, \jmn{we obtain} 

This model can be extended by including the term $\partial_t^2 \alpha$ on the left-hand side to model longitudinal acceleration of the rods in time. In particular, the equation $\partial_t^2 \alpha + \Delta_D \alpha = \phi$ then allows for wave propagation along the joined rods.

\subsection{Multiple continuum models of porous materials}
\label{sub: multi-porosity}\noindent
Let us consider the example illustrated in the middle of \Cref{fig: figure1}, in which $\Uc$ is an open cover of $\Omega \subset \mathbb{R}^2$ with $U_0 = U_1 = \Omega$. 

\begin{figure}[htb]
    \centering
    \includegraphics[width=1.0\linewidth]{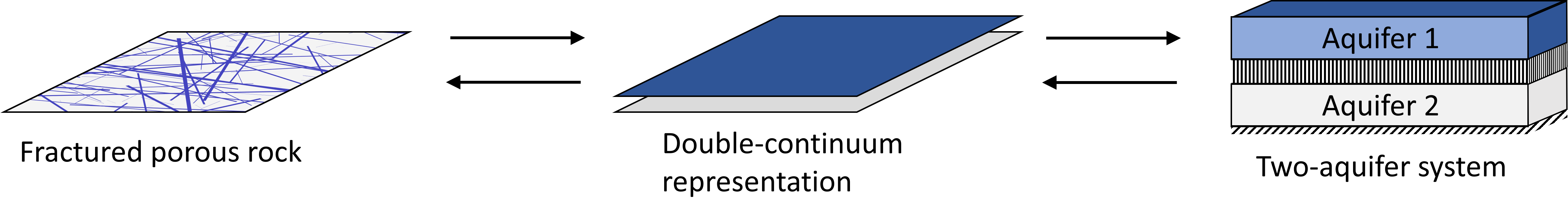}
    %\hspace{0.02\linewidth}
    %\includegraphics[width=0.425\linewidth]{cD1.jpg}
    \caption{Left and middle: Example of a fractured porous medium represented as two fully overlapping domains. Right and middle: Example of a system with two aquifers, each modeled as two-dimensional, connected by a low-permeable aquitard, again represented as two fully overlapping domains.}
    \label{fig: fracture}
\end{figure}

We provide two physical interpretation of this model. In the left part of Figure \ref{fig: fracture}, the model corresponds to one where two coupled elliptic variables are used to model a process on the whole domain, such as is common for so-called "double-continuum" models of fractured porous media \cite{barenblatt1960basic,chen1989transient,straughan2017mathematical}. A characteristic illustration of such a material is given in Figure \ref{fig: fracture}.  The physical interpretation of this modeling concept is that for percolating fracture networks, the timescales of flow in the fracture network and porous rock separate, such that it is appropriate to consider two fluid pressures: One representing the pressure in the fractures and one representing the pressure in the rock. In a homogenized model, each of these pressures are defined on the full domain, leading to the double-continuum concept \cite{arbogast1990derivation}. 

Concretely, the Hodge-Laplace problem \eqref{eqs: example mixed} describes exactly the double porosity model for this problem as formulated in the above references. We identify that as the domains fully overlap, then $U_0 = U_1 = U_{0,1}$ and thus $\mathbbm{1}_{0,1} \equiv 1$. Furthermore, $a_i$ describes the fluid pressure inside porous medium compartment $i$, the variable $b_i$ corresponds to the driving force for fluid motion (the gradient of pressure) within compartment $i$ and $b_{0, 1}$ driving force for exchange between $U_0$ and $U_1$ (the pressure difference). As is clear from the material properties included in equation \eqref{eq: conservation law}, the governing equations are given proportionality between driving force and fluid flow (known as Darcy's law) and the mass (or volume) balance equations. The boundary conditions \eqref{eq: bc mixed} describe zero fluid flux across the boundary, while the constraint \eqref{eq: constraint primal} fixes the mean value of the pressure across the system. Fluid compressibility can be incorporated in the system by including the time-dependent term $\partial_t \alpha$. 

Returning to Figure \ref{fig: fracture}, the double-porosity model is mathematically identical to the models of two interconnected aquifers separated by an aquitard, as studied in the hydrology literature \cite{motz1978steady, hunt1985flow, bear2013dynamics} 
%\jmn{Include references:  https://ascelibrary.org/doi/epdf/10.1061/JYCEAJ.0005028  and  https://agupubs.onlinelibrary.wiley.com/doi/epdf/10.1029/WR021i011p01637, possibly also one of Jacob Bear's books}. 
For the case of two aquifers, this system is directly included in the framework as described herein. For the case of more than two aquifers, where the aquifers and aquitards form an alternating stack, this system is a degenerate limit of our exposition, which is obtained by letting the weights $w_{i,j}$ in Remark \ref{rem: multiple} equal zero whenever $i-j>1$.  

Finally, we emphasize that the so-called multiple-network models discussed in e.g. \cite{lee2019mixed}, used to model flow of extravascular fluids in the brain, are also structurally identical to the equations given in Remark \ref{rem: multiple}.

\subsection{Mixed-dimensional coupling with high dimensionality gap}\noindent
Our third example is illustrated by the sketch to the right in Figure \ref{fig: figure1}. Here  the cover $\Uc$ corresponds to a three-dimensional domain $\Omega$ such that $U_0 := \Omega$, while $U_1 \subset U_0$ is an embedded, vertical cylinder with small radius $\epsilon > 0$.  Such models commonly arise for thin inclusions (such as fiber reinforced materials), and a particular topical example is that of blood vessels within a tissue, as illustrated in Figure \ref{fig: brain}. 

\begin{figure}[htb]
    \centering
\includegraphics[width=0.9\linewidth]{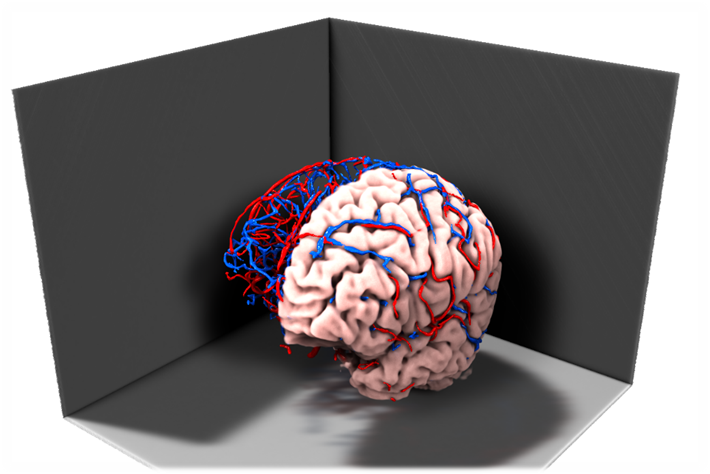}
    \caption{Blood flow in living tissue, such as this figure of the human brain, is frequently modeled as mixed-dimensional. In such a setting, the veins and arteries that are big enough to be resolved (visualized as blue and red, respectively) are modeled as graphs of one-dimensional segments, while the remaining tissue (is modeled as a three-dimensional domain). Figure from \cite{doi:10.1137/20M1362541}.}
    \label{fig: brain}
\end{figure}

As in the previous example, we interpret $a_0$ and $a_1$ as fluid pressures in the surroundings and the cylinder, respectively, $b_0, b_1$ are the corresponding fluxes, and $b_{0, 1}$ describes the mass exchange between the cylinder and the bulk. We recognize the material parameters as the permeability in the bulk ($w_0$), the resistance appearing in the Hagen-Poiseuille law for flow in the cylinder ($w_1\sim \epsilon^4$), and an exchange coefficient, associated with filtration between the cylinder and the bulk proportional to a pressure difference.  
The system \eqref{eqs: example mixed} therefore governs porous medium flow in subsurface systems with an injection or production well. The same equations are encountered in blood perfusion models of vascularized, biological tissue, see e.g. \cite{heltai2023reduced}.

When the ratio between the length and radius of the cylinder is large, it becomes attractive to approximate the pressure and flux inside the cylinder by using subspaces of functions that are constant along the cross-section. Such techniques lead to the simplified systems that are referred to as mixed-dimensional, which are analyzed in %\jmn{include citation to Ingeborg: https://link.springer.com/content/pdf/10.1007/s10596-019-09899-4.pdf}
\cite{koch2020modeling,koppl2018mathematical,kuchta2021analysis, gjerde2020singularity,doi:10.1137/20M1362541}.

\bibliographystyle{siamplain}
\bibliography{references}
\end{document}

% --- supplement: ex_supplement.tex ---

\maketitle

\section{A detailed example}

Here we include some equations and theorem-like environments to show
how these are labeled in a supplement and can be referenced from the
main text.
Consider the following equation:
\begin{equation}
  \label{eq:suppa}
  a^2 + b^2 = c^2.
\end{equation}
You can also reference equations such as \cref{eq:matrices,eq:bb} 
from the main article in this supplement.

\lipsum[100-101]

\begin{theorem}
  An example theorem.
\end{theorem}

\lipsum[102]
 
\begin{lemma}
  An example lemma.
\end{lemma}

\lipsum[103-105]

Here is an example citation: \cite{KoMa14}.

\section[Proof of Thm]{Proof of \cref{thm:bigthm}}
\label{sec:proof}
\lipsum[106-112]

\section{Additional experimental results}
\Cref{tab:foo} shows additional
supporting evidence. 

\begin{table}[htbp]
{\footnotesize
  \caption{Example table}  \label{tab:foo}
\begin{center}
  \begin{tabular}{|c|c|c|} \hline
   Species & \bf Mean & \bf Std.~Dev. \\ \hline
    1 & 3.4 & 1.2 \\
    2 & 5.4 & 0.6 \\ \hline
  \end{tabular}
\end{center}
}
\end{table}

\bibliographystyle{siamplain}
\bibliography{references}